\documentclass[12pt,a4paper]{amsart}

\usepackage{latexsym,amssymb,amsmath}
\usepackage{color}
\usepackage{graphics,xy,epsfig,picture,epic}

\textwidth 15.5cm
\oddsidemargin 1.2cm
\evensidemargin 0.9cm
\marginparwidth 1.9cm
\marginparsep 0.4cm
\marginparpush 0.4cm

\def\cc{{\mathcal C}}

\def\mm{{\mathcal M}}
\def\nn{{\mathcal N}}

%

\def\ffi{\varphi}
\def\eps{\varepsilon}
\def\dst{\displaystyle}

\renewcommand{\Im}{\mathrm{Im}\,}

\DeclareMathOperator{\vect}{span}
%
%

\def\C{{\mathbb{C}}}

\def\N{{\mathbb{N}}}

\def\Q{{\mathbb{Q}}}
\def\R{{\mathbb{R}}}
\def\S{{\mathbb{S}}}

\def\Z{{\mathbb{Z}}}

\def\d{\,{\mathrm{d}}}
%
%

\newcommand{\norm}[1]{{\left\|{#1}\right\|}}

\newcommand{\abs}[1]{{\left|{#1}\right|}}
\newcommand{\scal}[1]{{\left\langle{#1}\right\rangle}}

%
%

\newtheorem{lemma}{Lemma}[section]
\newtheorem{proposition}[lemma]{Proposition}
\newtheorem{theorem}[lemma]{Theorem}
\newtheorem{corollary}[lemma]{Corollary}

\theoremstyle{definition}
\newtheorem{definition}[lemma]{Definition}

\theoremstyle{remark}
\newtheorem{remark}[lemma]{Remark}

%
%
%
%

\begin{document}

\title[HUP and the Helmholtz-Laplace equation]
{From Heisenberg uniqueness pairs to properties of the Helmholtz and Laplace equations}

\author{Aingeru Fern\'{a}ndez-Bertolin}
\address{Univ. Bordeaux, IMB, UMR 5251, F-33400 Talence, France.
CNRS, IMB, UMR 5251, F-33400 Talence, France.}
\email{aingeru.fernandez-bertolin@u-bordeaux.fr}

\author{Karlheinz Gr\"ochenig}
\address{Faculty of Mathematics \\
University of Vienna \\
Oskar-Morgenstern-Platz 1 \\
A-1090 Vienna, Austria}
\email{karlheinz.groechenig@univie.ac.at}

\author{Philippe Jaming}
\address{Univ. Bordeaux, IMB, UMR 5251, F-33400 Talence, France.
CNRS, IMB, UMR 5251, F-33400 Talence, France.}
\email{Philippe.Jaming@math.u-bordeaux.fr}
\date{}

\keywords{Unique continuation, Heisenberg Uniqueness Pair, Helmholtz
  equation, Laplace equation, Schwarz
  reflection principle, harmonic functions, nodal set}
\subjclass[2010]{35B05, 35B60, 60H15}

\begin{abstract}
The aim of this paper is to establish uniqueness properties of solutions of the Helmholtz and Laplace equations.
In particular, we show that if two solutions of such equations on a
domain of $\R^d$ agree on two  intersecting $d-1$-dimensional
submanifolds in generic position, then they agree everywhere.
\end{abstract}

\maketitle

\section{Introduction}

The aim of this paper is to bridge two topics: unique determination of 
measures from restrictions of their Fourier transform and
uniqueness properties of solutions of Helmholtz and Laplace 
equations.


Our starting point is the recent notion of Heisenberg uniqueness pairs
introduced by H. Hedenmalm and A. Montes-Rodr\'iguez
\cite{HMR} that we slightly extended in \cite{GJ}:

\begin{definition}
Let $\mm\subset\R^d$ be a manifold and $\Sigma\subset\R^d$ be a set. We say that $(\mm,\Sigma)$
is a {\em Heisenberg uniqueness pair} if the only finite measure $\mu$ supported on $\mm$
with Fourier transform vanishing on $\Sigma$ is $\mu=0$.
\end{definition}

The main focus of \cite{HMR} was 2-dimensional with  $\mm$ being  a hyperbola and $\Sigma$ a discrete set.
The setting was slightly more restrictive as the measure was supposed to be absolutely continuous with respect to
arc length. The property of being  a Heisenberg uniqueness pair has then been established for various curves:
P. Sj\"olin \cite{Sj,Sj2} considered the parabola and the circle, N. Lev \cite{Le} the circle.
Ph. Jaming and K. Kellay \cite{JK} introduced new geometric methods that allowed to treat many curves
when $\Sigma$ consists of 2 intersecting lines.  Those methods were
used in \cite{GS} 
to provide more examples. For results in higher dimensions  we refer to our previous work \cite{GJ}.
For links with the problem of determining point distributions in $\R^d$ (or more generally the determination of finite measures on $\R^d$)
from their projections onto lower dimensional spaces, we refer to \cite{FBGJ}.

So far all  results on the subject treated Heisenberg uniqueness pairs
as a topic in  Fourier analysis related to the uncertainty principle
and with methodological input from  dynamical systems.
In this work we change our point of view and study Heisenberg
uniqueness pairs 
from the perspective of  partial differential 
equations.  To explain the connection to PDEs, we  recall
the results of N. Lev and P. Sj\"olin \cite{Le,Sj} ({\it see} also \cite{JK}).
To  $f \in L^1(\S^1)$ (where $\S^{d-1}$ is the unit sphere of $\R^d$)  we associate
the measure on $\R^2$ given by 
$$
\int \ffi\d\mu=\int_0^{2\pi} \ffi(\cos s,\sin s)f(\cos s,\sin s)\d s,
$$
$\ffi\in\cc(\R^2)$,  i.e., $\mu$ is a measure supported on $\S^1$ and
is absolutely continuous with respect to arc length 
on $\S^1$. 
The Fourier transform of $\mu$ is then defined as
$$
\widehat{\mu}(\xi,\eta)= \int
\exp\bigl(-i(x\eta+y\xi)\bigr)\d\mu (x,y).
$$ 
 Let
$\theta_1,\theta_2\in\R^2$ be two unit vectors and define
$\theta=\arccos\scal{\theta_1,\theta_2}$. Assume that
$\widehat{\mu}=0$ on the lines $\theta_1^\perp$, $\theta_2^\perp$. The
main result of \cite{Le,Sj} asserts that  $\mu=0$ if and only if
$\theta \notin\pi\Q$. 
The connection to partial differential equations is established by the
following observation. If $\mathrm{supp}\, \mu
\subseteq \S ^{d-1}$, then  $u=\widehat{\mu}$ is a solution of the Helmholtz
equation $\Delta u+u=0$. The result of Lev and Sj\"olin can be recast
as follows: If  a solution  
of the Helmholtz equation  is the Fourier transform of a measure
and  vanishes on two   lines whose angle of  intersection is  an irrational 
multiple of $\pi$, then the solution  is identically $0$. 
It turns out that this statement is a special 
case of an older result about nodal sets by S.Y. Cheng \cite[Thm.~2.5]{Ch}.

\medskip

\noindent{\bf Theorem.} (Cheng) {\sl
\label{th:LSGJ}
Let $h$ be a $\cc^\infty$ function on a domain $\Omega\subset\R^2$.
Let $u$ be a solution of $(\Delta +h)u=0$ and assume that two nodal 
lines of $u$ intersect at a point $x_0\in\Omega$,
 i.e.,  there exists two smooth curves
$\Gamma_1,\Gamma_2$ such that $u=0$ on $\Gamma_1\cup\Gamma_2$
with $\Gamma_1\cap\Gamma_2=\{x_0\}$. Then the set of nodal lines through $x_0$ forms an equiangular system. In particular, the angle
between the tangents of $\Gamma_1,\Gamma_2$ at $x_0$ is a rational multiple of $\pi$.}

\medskip

Cheng's proof is rather involved and is based on a subtle  local representation formula of solutions
of elliptic equations due to L. Bers \cite{Be}. 
One of our goals  is to provide two much simpler proofs of Cheng's 
result in the particular case of the Helmholtz ($k>0$) and the Laplace ($k=0$) 
equation $\Delta u+k^2 u=0$.    These proofs are only marginally more
involved than the ones in \cite{Le} and  \cite{Sj} and work under much
less restrictive assumptions.

The first proof is simple and based on
the Schwarz reflection principle for harmonic and analytic functions
and yields the following extension of the result of Lev and Sj\"olin.

\medskip

\noindent{\bf Theorem A.} {\sl 
Let $d\geq 2$ and $\Omega$ be a domain in $\R^d$ with $0\in\Omega$.
Let $\theta_1,\theta_2\in\S^{d-1}$ be such that $\dst\arccos\scal{\theta_1,\theta_2}\notin\pi\Q$.
Let $k\in\R$ and let $u$ be a solution of the Laplace-Helmholtz equation on $\Omega$
$$
\Delta u+k^2 u=0 \, .
$$
If $u$ satisfies  one of the following boundary conditions
$$
\left\{\begin{matrix}u=0&\mbox{on }\theta_1^\perp\cap\Omega\\
u=0&\mbox{on }\theta_2^\perp\cap\Omega\end{matrix}\right.,\quad \text{
or } \quad 
\left\{\begin{matrix}u=0&\mbox{ on } \theta_1^\perp\cap\Omega\\
\partial_nu=0&\mbox{on }\theta_2^\perp\cap\Omega\end{matrix}\right. \, ,
$$
then $u=0$.}

\medskip

We remark that Theorem~A covers general solutions of the Helmholtz
equation, not only solutions that are   Fourier
transforms of a finite measure. Furthermore, Theorem~A allows for a
mixture of Dirichlet and Neumann conditions on the given hyperplanes.  
Finally, the result is valid for  higher dimensions for which part of Cheng's argument is not valid. 




It is natural to  replace  hyperplanes by more general
manifolds. However, in that case, the reflection principle 
becomes substantially more involved,  as it  requires not only  a
point-to-point reflection, but also an additional 
integral operator (see, e.g., \cite{EK,Sa} and references therein). This is a major limitation
to carry such a method to a more general setting.
To overcome this limitation, we pursue a second idea of proof
based on the fact that a solution of the Helmholtz equation 
$\Delta u+k^2u=0$ ($k\not=0$) in a neighborhood of $0$
has an expansion in spherical harmonics of the form
\begin{equation}
\label{eq:helm1}
u(r\theta) \sim(2\pi)^{1/2}(k r)^{-(d-2)/2}\sum_{m=0}^\infty\sum_{j=1}^{N(m)}
a_{m,j} J_{\nu(m)}(k r) Y_m^j(\theta)
\end{equation}
for $r\geq 0$ and $\theta \in S^{d-1}$. Here $J_\nu$ is the Bessel function, $\nu(m) = m + (d - 2)/2$ and $\{Y^j_m\}_{j=1,\ldots,N(m)}$ is a basis for the
spherical harmonics of degree $m$ in $\R^d$. The solution of the
Laplace equation $\Delta u=0$ ($k=0$) possesses a similar local
expansion with $r^m$ in place of the  Bessel function $(k r)^{-(d-2)/2}J_{\nu (m)}(kr)$. 


In dimension 2, we use this formula to show that two Robin conditions
on curves whose  angle of intersection is  an irrational multiple of $\pi$
are essentially incompatible for Laplace-Helmholtz equations
(see Theorem \ref{th:Robin2} for the precise statement).
In this context, we prove a version of Cheng's theorem
for functions that have a Fourier
expansion in polar coordinates of the form
$$
u(r\theta) \sim \sum_{m=-\infty}^\infty c_m\ffi_{\abs{m}}(r)e^{im\theta} \, ,
$$
where the $\ffi_{k}$'s are a family of functions such that
$\ffi_k(r)\sim r^{\alpha_k}$ for some (strictly) increasing sequence of non-negative reals. We call this result the {\em non-crossing lemma }
as it states that nodal lines of such functions cannot cross at an  arbitrary angle.
As a corollary, we obtain a simpler proof of
Cheng's result for solutions of the equation $\Delta u+h(x)u=0$
for radial $h$ (in particular for  constant $h$, we obtain the
 Helmholtz-Laplace equation). Exploiting explicit formulas for bases
 of spherical harmonics, this proof extends to arbitrary  dimension
 and yields  the following statement. 

\medskip

\noindent{\bf Theorem B.} {\sl 
Let $d\geq 3$ and let $\Omega$ be a domain in $\R^d$.
Let $\mm_1,\mm_2$ be two $d-1$ dimensional submanifolds of $\R^d$ that intersect at $0$ and let
$\theta_1,\theta_2$ be the normal vectors  to $\mm_1, \mm _2$  at $0$. Assume that $\arccos\scal{\theta_1,\theta_2}\notin\pi\Q$.

Let $k\in\R$ and let $u$ be a solution of $\Delta u+k^2u=0$ on $\Omega$ such that $u=0$ on $\mm_1\cup\mm_2$. Then $u=0$
on $\Omega$.
}

\medskip

In other words, under the stated conditions on  $\mm_1,\mm_2$  as above, 
$(\S^{d-1},\mm_1\cup\mm_2)$ is a Heisenberg uniqueness pair.

Theorem B is a variation on the unique continuation of solutions of the Helmholtz equation. It states that a solution of that equation is uniquely determined by its restriction to two generic $(d-1)$-dimensional intersecting submanifolds and can therefore be continued to the full domain.

From the point of view of PDEs, it is perhaps more natural to say that a solution of the Helmholtz equation can be uniquely continued from one side of the manifold $\mm_1$ to the other (by some sort of reflection) and likewise for $\mm_2$. Theorem B says that, in general, these continuations are not compatible and thus a solution vanishing on $\mm_1$ and $\mm_2$ vanishes everywhere.

This picture makes it plausible that the extension/continuation of a solution from its restrictions to lower dimensional manifolds, though unique, is inherently unstable. Indeed, we will prove several statements in this regard (see, e.g., Propositions 3.7 and 4.2).

\smallskip

The  paper is organized as follows. In  Section~2, we give two simple
proofs of  Theorem A and prove the non-crossing Lemma and its
corollaries. Section~3 treats the more technical extension of 
the uniqueness properties under  Robin-type  conditions 
in dimension 2.  Section~4 is devoted to some  results in higher dimensions.

\section{Two simple proofs}

\subsection{Proofs using reflection principles}
\label{sec:2}
In this section we give a simple proof of Theorem~A based on the
reflection principle for harmonic functions and its extension to the Helmholtz equation. At the same time we
remove the artificial condition in \cite{Le,Sj,JK,GJ} where the
solution of the Helmholtz equation  is assumed to be  the Fourier transform of a
measure. 


\begin{theorem} \label{refl1}
  Let $d\geq 2$, $k\in \R$,  and $\Omega$ be a domain (an open,
  connected set)  in $\R^d$ with $0\in\Omega$.
Let $\theta_1,\theta_2\in\S^{d-1}$ be such that $\dst\arccos\scal{\theta_1,\theta_2}\notin\pi\Q$.
 Let $u$ be a solution of the Laplace-Helmholtz equation on $\Omega$
$$
\Delta u+k^2 u=0
$$
satisfying one of the following boundary conditions
$$
\left\{\begin{matrix}u=0&\mbox{on }\theta_1^\perp\cap\Omega\\
u=0&\mbox{on }\theta_2^\perp\cap\Omega\end{matrix}\right.
\quad\mbox{or}\quad
\left\{\begin{matrix}u=0&\mbox{on }\theta_1^\perp\cap\Omega\\
\partial_nu=0&\mbox{on }\theta_2^\perp\cap\Omega\end{matrix}\right..
$$
Then $u=0$.
\end{theorem}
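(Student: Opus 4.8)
The plan is to reduce everything to a local statement near the common point $0$ and then propagate by analyticity. First I would fix a ball $B\subset\Omega$ centered at $0$; such a ball is invariant under every reflection fixing a hyperplane through the origin. Since $\Delta+k^2$ is elliptic with constant (hence analytic) coefficients, any solution $u$ is real-analytic, so it suffices to prove $u\equiv0$ on $B$ and then invoke the identity principle for real-analytic functions on the connected set $\Omega$ to conclude $u\equiv0$ on all of $\Omega$.

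The heart of the matter is a reflection principle adapted to the Laplace--Helmholtz equation. Let $R_i$ denote the orthogonal reflection of $\R^d$ across $\theta_i^\perp$, namely $R_ix=x-2\scal{x,\theta_i}\theta_i$, so that $R_i(B)=B$. Because the Laplacian commutes with orthogonal maps, $\Delta(u\circ R_i)=(\Delta u)\circ R_i$, and hence $u\circ R_i$ is again a solution on $B$. I would then show that the Dirichlet condition $u=0$ on $\theta_i^\perp\cap B$ forces the odd symmetry $u\circ R_i=-u$, while the Neumann condition $\partial_n u=0$ forces the even symmetry $u\circ R_i=u$. For the Dirichlet case, set $w=u+u\circ R_i$: this is a solution, it is even under $R_i$, and it satisfies $w=0$ on $\theta_i^\perp\cap B$; evenness gives $\partial_n w=0$ there as well, so both Cauchy data of $w$ vanish on the hyperplane, whence $w\equiv0$, i.e.\ $u\circ R_i=-u$. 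The Neumann case is identical with $w=u-u\circ R_i$.

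With these symmetries in hand, I would compose the two reflections. In both boundary-condition scenarios the first condition is Dirichlet, so $u\circ R_1=-u$. The composite $\rho=R_2\circ R_1$ is a rotation acting on the plane $P=\vect(\theta_1,\theta_2)$ through the angle $2\theta$, with $\theta=\arccos\scal{\theta_1,\theta_2}$, and acting as the identity on $P^\perp$. Combining the symmetries yields $u\circ\rho=u$ in the Dirichlet--Dirichlet case and $u\circ\rho=-u$, hence $u\circ\rho^2=u$, in the mixed case; so in either case $u$ is invariant under a rotation of $P$ of angle $2\theta$ or $4\theta$. Since $\theta\notin\pi\Q$, the corresponding orbit $\{2n\theta \bmod 2\pi\}$ (resp.\ $\{4n\theta \bmod 2\pi\}$) is dense in $[0,2\pi)$, so by continuity $u$ is invariant under \emph{every} rotation of $P$; that is, $u$ is radial in the $P$-variables. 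But a function radial in $P$ is in particular invariant under the reflection $R_1$, so $u\circ R_1=u$, which together with $u\circ R_1=-u$ forces $u\equiv0$ on $B$.

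The step I expect to require the most care is the reflection principle for $k\neq0$: the classical Schwarz principle is phrased for harmonic functions, so one must verify that the odd/even extension persists for the Helmholtz operator. I would obtain this from the vanishing of the Cauchy data of $w$ above together with Holmgren's theorem, which applies since $\Delta+k^2$ has analytic coefficients and every hyperplane is non-characteristic for it; this route also makes the passage from $B$ to $\Omega$ immediate through the analyticity of $u$. A secondary point worth stating explicitly is that the whole rotation argument is intrinsically two-dimensional, as $\rho$ is nontrivial only on $P$, the remaining $d-2$ coordinates being carried along as inert parameters; this is precisely what lets the proof run unchanged in every dimension $d\geq2$.
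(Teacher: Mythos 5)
Your proof is correct and takes essentially the same route as the paper: Schwarz reflection gives the symmetries $u\circ R_j=\pm u$, the composition of the two reflections is an irrational rotation of $\vect(\theta_1,\theta_2)$, and density of its orbit forces $u=0$ on a ball, with real-analyticity propagating this to all of $\Omega$. The only differences are cosmetic: you justify the reflection step for $k\neq 0$ via vanishing Cauchy data plus Holmgren (the paper cites the Diaz--Ludford reflection principle), and you conclude by deducing radiality in the plane and a parity contradiction, whereas the paper concludes directly from density of the rotated nodal hyperplane --- indeed the radiality observation appears in the paper as a remark right after the theorem.
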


\begin{proof}
Let $r$ be such that the ball $\overline{B(0,r)}\subset\Omega$. As $u$ is real analytic, it is enough to show that
$u=0$ on $B(0,r)$.

%
For $j=1,2$, let $R_j$ be the reflection with respect to the hyperplane $\theta_j^\perp$.
The Schwarz reflection principle (see, e.g.,  \cite{DL}) implies that  $u(R_j x)=-
u(x)$ in the case of the Dirichlet condition $u \Big|_{H_j} = 0$ and $u(R_j x)=
u(x)$ in the case of the Neumann  condition $\partial _n u \Big|_{H_j}
= 0$.  It follows that $u((R_1R_2)^{2n} x)= u(x)$.



In particular, if $u=0$ on $\theta_1 ^\perp\cap B(0,r)$, then
$u\bigl((R_1R_2)^{2n}x)=0$ for every $x\in \theta_1 ^\perp \cap B(0,r)$. 
But $R_1R_2$ is a rotation by the  angle 
$2\arccos\scal{\theta_1,\theta_2}$ in the affine plane $\vect(\theta_1,\theta_2)$. As the angle is an irrational multiple of
$\pi$, the orbit of $\theta_1^\perp\cap B(0,r)$ under $ (R_1R_2)^2$ is dense in $B(0,r)$, thus $u=0$ on a dense set and, 
as $u$ is continuous, $u=0$ everywhere.
\end{proof}

Theorem~\ref{refl1} assumes either two Dirichlet conditions or a mixture of
Dirichlet and Neumann conditions. For the case of two Neumann conditions
it is not difficult to see that, if $u$ is radial, then
it satisfies two Neumann conditions,
$\left\{\begin{matrix}\partial_nu=0&\mbox{on }\theta_1 ^\perp \\
\partial_nu=0&\mbox{on }\theta_2 ^\perp \end{matrix}\right.$ on two
arbitrary hyperplanes $\theta _1 ^\perp$ and $\theta _2 ^\perp$.  

Moreover, in dimension 2, the above proof shows that only radial functions can occur. It is then not difficult to see that
$u$ is constant if it satisfies the Laplace equation $\Delta u=0$ and that $u$ is a constant multiple of
$J_0(k|x|)$ if $u$ is a solution of the Helmholtz equation $\Delta u+k^2u=0$.

\subsection{The non-crossing lemma and applications}
\label{Sec:3}

In the previous section we used reflection principles
to show that a  solution of the Laplace and Helmholtz equation can not
vanish on two lines that intersect with 
irrational angle. When replacing lines by more general curves,
reflection principles become substantially more involved~\cite{EK,Sa}. In
order to tackle that case, we develop 
an alternative proof that is based on Fourier expansions 
in the angular variable in polar coordinates.
The results will follow from  the following lemma.

\begin{lemma}[Non-crossing lemma]
Let $\Gamma_1,\Gamma_2$ be two $\cc^1$-curves in the plane intersecting at $0$.
Let $\gamma^\prime_1$ (resp.\ $\gamma^\prime_2$) be the vector tangent to $\Gamma_1$ (resp.\ $\Gamma_2$)
at $0$ and assume that the angle between $\gamma^\prime_1$ and $\gamma^\prime_2$ is not
a rational multiple of $\pi$, $\dst\arccos\scal{\gamma^\prime_1,\gamma^\prime_2}\notin\pi\Q$.

Let $k_{m}$ be  a strictly increasing sequence of non-negative
numbers and  $(\varphi_m)_{m\geq0}$ be a sequence of 
functions such that $\varphi_m(r)=r^{k_{m}}\bigl(1+o(1)\bigr)$ 
uniformly in a fixed  neighborhood of $0$.   Let $u$ be a function on $\R^2$ given by an expansion in polar coordinates
of the form
\[
u(r,\theta)=\sum_{m\in\mathbb{Z}} c_m\varphi_{|m|}(r)e^{im\theta},
\] 
with $\sum_{m\in\Z}|c_m|r_0^{k_{|m|}}<+\infty$ for some $r_0>0$.

If $u$ vanishes on $\Gamma_1$ and on $\Gamma_2$, then $u\equiv0$
in $\{z\in\C\,: |z|<r_0\}$.
\end{lemma}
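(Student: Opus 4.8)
The plan is to argue by contradiction. Assuming $u\not\equiv0$, I would isolate the lowest-order term of the expansion as $r\to0$, which turns out to be an angular trigonometric polynomial, show that vanishing on the two curves forces this polynomial to vanish at the two tangent directions, and then use a zero-counting argument to produce a rational angle, contradicting the hypothesis. First I would set $n=\min\set{\abs{m}\tq c_m\neq0}$, the lowest frequency actually present (if all $c_m=0$ there is nothing to prove). Since $(k_m)$ is strictly increasing, the terms of frequency $n$ carry the smallest power $r^{k_n}$ and should dominate near $0$. I would introduce the angular factor
$$
P_n(\theta)=\sum_{\abs{m}=n}c_m e^{im\theta},
$$
which equals $c_0$ when $n=0$ and $c_n e^{in\theta}+c_{-n}e^{-in\theta}$ when $n\geq1$; by the minimality of $n$ it is not identically zero.

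The key analytic step, and the main obstacle, is to justify that
$$
\lim_{r\to0^+}\frac{u(r,\theta)}{r^{k_n}}=P_n(\theta)\qquad\text{uniformly in }\theta .
$$
Writing $\varphi_{\abs{m}}(r)=r^{k_{\abs{m}}}(1+o(1))$ with the $o(1)$ uniform in $m$, the frequency-$n$ terms contribute $\varphi_n(r)r^{-k_n}P_n(\theta)\to P_n(\theta)$, so everything rests on controlling the tail $\sum_{\abs{m}>n}c_m\varphi_{\abs{m}}(r)e^{im\theta}$. Here I would invoke the summability hypothesis: for $0<r<r_0$ and $\abs{m}>n$ one has $k_{\abs{m}}\geq k_{n+1}>k_n$, whence $r^{k_{\abs{m}}}=(r/r_0)^{k_{\abs{m}}}r_0^{k_{\abs{m}}}\leq(r/r_0)^{k_{n+1}}r_0^{k_{\abs{m}}}$. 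Combined with the uniform bound $\abs{\varphi_{\abs{m}}(r)}\leq2r^{k_{\abs{m}}}$ valid near $0$, this gives
$$
\frac{1}{r^{k_n}}\Big|\sum_{\abs{m}>n}c_m\varphi_{\abs{m}}(r)e^{im\theta}\Big|\leq\frac{2}{r_0^{k_{n+1}}}\Big(\sum_{m\in\Z}\abs{c_m}r_0^{k_{\abs{m}}}\Big)\,r^{k_{n+1}-k_n},
$$
which tends to $0$ uniformly in $\theta$ because $k_{n+1}-k_n>0$. This is the only place where the absolute-convergence assumption and the uniformity of the $o(1)$ are really used, and I expect it to be the most delicate point to write carefully.

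Finally I would exploit the vanishing of $u$. Parametrizing $\Gamma_j$ near $0$ by a regular $\cc^1$ arc $\gamma_j(t)$ with $\gamma_j(0)=0$ and $\gamma_j'(0)=\gamma'_j\neq0$, the point $\gamma_j(t)=t(\gamma'_j+o(1))$ has radius tending to $0$ and polar angle tending to $\alpha_j:=\arg\gamma'_j$ as $t\to0^+$. Dividing the identity $u(\gamma_j(t))=0$ by $r^{k_n}$ and letting $t\to0^+$, the uniform limit above yields $P_n(\alpha_1)=P_n(\alpha_2)=0$. If $n=0$ this is impossible since $P_0=c_0\neq0$, so $n\geq1$; and for $n\geq1$ the polynomial $P_n(\theta)=c_n e^{in\theta}+c_{-n}e^{-in\theta}$ can have a real zero only when both $c_{\pm n}\neq0$ and $\abs{c_n}=\abs{c_{-n}}$, in which case $e^{2in\theta}=-c_{-n}/c_n$ forces the zeros to form an arithmetic progression of common difference $\pi/n$. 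Hence $\alpha_1-\alpha_2\in\frac{\pi}{n}\Z$, so the angle $\arccos\scal{\gamma'_1,\gamma'_2}$ between the tangents lies in $\frac{\pi}{n}\Z$, a rational multiple of $\pi$ — contradicting the hypothesis $\arccos\scal{\gamma'_1,\gamma'_2}\notin\pi\Q$. Therefore no nonzero coefficient $c_m$ can exist, and $u\equiv0$ on $\set{z\in\C\tq\abs{z}<r_0}$.
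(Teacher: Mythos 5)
Your proof is correct and follows essentially the same route as the paper: both isolate the lowest-order angular term $c_n e^{in\theta}+c_{-n}e^{-in\theta}$ dominating at order $r^{k_n}$ and use vanishing along the two tangent directions to force $c_{\pm n}=0$, the paper via induction and the nonvanishing determinant $-2i\sin(n\eta)\neq 0$ of a $2\times 2$ linear system, you via a minimal nonzero frequency and the observation that the real zeros of that trigonometric polynomial are spaced by $\pi/n$ --- the same fact in different packaging. Your explicit uniform tail estimate justifying the $o(r^{k_n})$ remainder is a welcome detail that the paper leaves implicit.
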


\begin{remark}
The picture below illustrates the conditions in the lemma. Also, the curves do not need to intersect, it is enough that
they are rays emanating from the origin. The lemma is typically
applied to a function $u$ that is real analytic in a domain $\Omega$ 
and that has a representation of the above form in the neighborhood of
$0$. In that case we can even conclude that $u$ vanishes in all of $\Omega$.

\begin{figure}[!ht]
\begin{center}
\setlength{\unitlength}{0.5cm}
\begin{picture}(9,9)
\qbezier(0,4.5)(0,9)(4.5,9)
\qbezier(4.5,9)(9,9)(9,4.5)
\qbezier(9,4.5)(9,0)(4.5,0)
\qbezier(4.5,0)(0,0)(0,4.5)
\put(2,7){$\Omega$}
\put(3.8,4){$0$}
\qbezier(4.5,4.5)(7,8.5)(8,7)
\put(4.5,4.5){\vector(1,2){2}}
\put(5.3,8.1){$\gamma^\prime_2$}
\put(7.1,6.1){$\Gamma_2$}
\qbezier(4.5,4.5)(7,4.5)(7.5,2.5)
\put(4.5,4.5){\vector(1,0){4}}
\put(8,5){$\gamma^\prime_1$}
\put(7.6,2.2){$\Gamma_1$}
\end{picture}
\caption{In this lemma $\gamma^\prime_1=(1,0)$ and $\gamma^\prime_2=(1,2)$.}
\label{fig:cor3}
\end{center}
\end{figure}
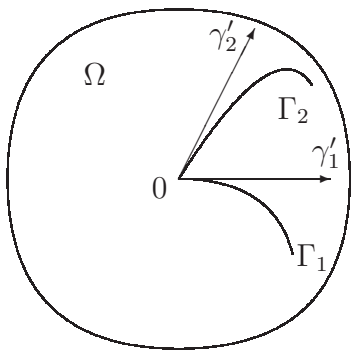
\end{remark}


\begin{proof}
Without loss of generality, we may parametrize the curves in polar
coordinates by the radius $r$ so that 
$\gamma _1(r) = (r,\theta_1(r))$ and $\gamma _2(r) = (r,\theta_2(r))$ with $\theta_1(0)=0,\
\theta_2(0)=\eta$ and $\eta = \arccos{\langle \gamma_1', \gamma
  _2'\rangle }$. (If necessary, we shrink the given neighborhood of $0$
  so that $\theta _j(t)$ is well-defined.) Since the   curves
  intersect at the origin, we must have $u(0,0)=c_0=0$.

  Now  assume that $c_{m}=0$ for $m=0,\pm1,\dots,\pm (n-1)$, then by
  induction we have 
\begin{eqnarray*}
u(r,\theta)&=&\sum_{|m|\geq n} c_m\varphi_{|m|}(r)e^{im\theta}=
\big(c_ne^{in\theta}+c_{-n}e^{-in\theta}\big)r^{k_n}\big(1+o(1)\big)+\sum_{|m|>n}c_m\varphi_m(r)e^{im\theta}\\
&=&\big(c_ne^{in\theta}+c_{-n}e^{-in\theta}\big)r^{k_n}+o(r^{k_n})
\end{eqnarray*}

In other words, we have 
$$
\lim _{r \to 0} \frac{u\big(r,\theta_j(r)\big)}{r^{k_n}} = 0$$
 for  $j=1,2$, and the hypothesis implies that
\begin{equation}
\label{eq:cross}
\left\{
\begin{matrix}
c_n&+&c_{-n}&=&0 , \\ c_ne^{in\eta}&+&c_{-n}e^{-in\eta}&=&0 .
\end{matrix}\right.
\end{equation}
This is  a system of two equations with two unknowns, and the
determinant of its coefficient matrix is $-2i\sin(n\eta)\not=0$ 
since $\eta\notin\pi\Q$. Hence, $c_{\pm n}=0$ and by induction we
conclude that  $c_n =0$ for all $n\in \Z $,  thus $u=0$. 
\end{proof}

\begin{remark}
\label{rem:cross}
 The condition $u=0$ on $\Gamma_2$ may be replaced by $\partial_\theta u=0$ on $\Gamma_2$.
Then  $\partial_\theta u(r,\theta)=\sum_{m\in\mathbb{Z}} imc_m\varphi_{|m|}(r)e^{im\theta}$,
and the system \eqref{eq:cross} is replaced by
$$
\left\{\begin{matrix}
c_n&+&c_{-n}&=&0\\ nc_ne^{in\eta}&-&nc_{-n}e^{-in\eta}&=&0  .
\end{matrix}\right.
$$
Since the  determinant is  $-2n\cos n\eta\not=0$ for $n\not=0$,  the
induction procedure is the same. 
\end{remark}

\begin{corollary}\label{cor:33}
Let $\Omega$ be a domain of $\R^2$.
Let $\Gamma_1,\Gamma_2$ be two $\cc^1$-curves in the plane intersecting at some $\omega\in\Omega$.
Let $\gamma^\prime_1$ (resp.\ $\gamma^\prime_2$) be the unit vector tangent to $\Gamma_1$ (resp. $\Gamma_2$)
at $\omega$ and assume that the angle between $\gamma^\prime_1$ and $\gamma^\prime_2$ is not
a rational multiple of $\pi$, $\dst\arccos\scal{\gamma^\prime_1,\gamma^\prime_2}\notin\pi\Q$.

Let $g$ be a continuous radial function
and define $h$ by $h(x)=g(x-\omega)$.
Then the only solution $u$ of $\Delta u+h\,u=0$ in $\Omega$ such that $u=0$ on $\Gamma_1\cup \Gamma_2$
is $u=0$ in $\Omega$.
\end{corollary}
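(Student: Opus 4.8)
The plan is to deduce the corollary from the Non-crossing Lemma by exhibiting, for the given solution $u$, a polar expansion of the type the lemma requires. First I would translate coordinates so that the intersection point $\omega$ becomes the origin. Since $h(x)=g(x-\omega)$ with $g$ radial, the translated potential is radial, $h=h(\abs{x})$, and the equation retains the form $\Delta u+hu=0$; the hypotheses on $\Gamma_1,\Gamma_2$ (in particular $\arccos\scal{\gamma'_1,\gamma'_2}\notin\pi\Q$) are translation invariant. As a (classical) solution, $u$ is $\cc^2$, hence continuous and bounded on some closed ball $\overline{B(0,r_1)}\subset\Omega$.

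Next I would pass to the angular Fourier coefficients $c_m(r)=\frac1{2\pi}\int_0^{2\pi}u(r,\theta)e^{-im\theta}\d\theta$, so that $u(r,\theta)=\sum_{m\in\Z}c_m(r)e^{im\theta}$. Writing the Laplacian in polar coordinates and using that $h$ is radial, the angular modes decouple: projecting $\Delta u+hu=0$ onto the $m$-th mode shows that each $c_m$ solves the Bessel-type ODE
$$
r^2c_m''+rc_m'+\bigl(r^2h(r)-m^2\bigr)c_m=0 .
$$
This equation has a regular singular point at $r=0$ with indicial roots $\pm\abs{m}$ (a double root $0$ when $m=0$). By the theory of ODEs with a regular singular point there is a solution $\varphi_{\abs{m}}$ with $\varphi_{\abs{m}}(r)=r^{\abs{m}}\bigl(1+o(1)\bigr)$ as $r\to0$, whereas every linearly independent solution grows like $r^{-\abs{m}}$ (like $\log r$ for $m=0$) and is therefore unbounded at the origin.

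Since $u$ is bounded near $0$, each coefficient $c_m(r)$ stays bounded as $r\to0$, so it cannot contain the unbounded solution; hence $c_m(r)=c_m\varphi_{\abs{m}}(r)$ for a constant $c_m$, and
$$
u(r,\theta)=\sum_{m\in\Z}c_m\varphi_{\abs{m}}(r)e^{im\theta}
$$
with exponents $k_m=m$ forming a strictly increasing sequence of non-negative reals, exactly as in the lemma. To verify the summability $\sum_m\abs{c_m}r_0^{\abs{m}}<\infty$, I would evaluate at $r_1$: there $c_m\varphi_{\abs{m}}(r_1)=c_m(r_1)$, and $(c_m(r_1))_m\in\ell^2$ by Parseval because $u(r_1,\cdot)\in L^2$; combining this with $\varphi_{\abs{m}}(r_1)\approx r_1^{\abs{m}}$ gives $\abs{c_m}r_0^{\abs{m}}\approx\abs{c_m(r_1)}(r_0/r_1)^{\abs{m}}$, which for $r_0<r_1$ is summable by Cauchy--Schwarz.

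With the expansion and these estimates in hand, the Non-crossing Lemma applies directly: the vanishing of $u$ on $\Gamma_1$ and $\Gamma_2$ near $0$ supplies the two required conditions, and the irrationality of the intersection angle yields a nonzero determinant, forcing all $c_m=0$ and hence $u\equiv0$ on $B(0,r_0)$. To reach $u=0$ on all of $\Omega$ I would invoke the weak unique continuation property for $\Delta u+hu=0$ with bounded $h$, which propagates the vanishing from an open ball to the whole connected domain. The main obstacle lies in the second and third steps: when $g$ is merely continuous there is no convergent Frobenius series, so I must establish the asymptotics $\varphi_{\abs{m}}(r)=r^{\abs{m}}(1+o(1))$ together with the bound $\varphi_{\abs{m}}(r_1)\approx r_1^{\abs{m}}$ \emph{uniformly in $m$}; I would obtain this by rewriting the ODE as a Volterra integral equation and iterating, which produces the leading term and an error controlled uniformly for $r$ in a fixed neighborhood of the origin.
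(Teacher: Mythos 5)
Your proposal is correct and follows essentially the same route as the paper: reduce to $\omega=0$, expand $u$ in polar coordinates as $\sum_{m\in\Z}c_m\varphi_{\abs{m}}(r)e^{im\theta}$ with $\varphi_m$ the solution of $\varphi_m''+\frac{1}{r}\varphi_m'+\bigl(h(r)-\frac{m^2}{r^2}\bigr)\varphi_m=0$ that is regular at $0$, check $\varphi_m(r)\simeq r^{m}$, and invoke the non-crossing lemma. The only differences are that you supply details the paper leaves implicit --- ruling out the singular Frobenius branch via boundedness of $u$ near $0$, summability of the coefficients via Parseval, and the uniform-in-$m$ asymptotics via a Volterra iteration, which is indeed the point needing care since $g$ is merely continuous --- and that you pass from the ball to all of $\Omega$ by weak unique continuation for bounded potentials rather than the paper's appeal to real-analyticity of $u$, which is actually the safer justification given that $h$ is only assumed continuous.
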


\begin{proof} Without loss of generality, we may assume that $\omega=0$.
As $u$ is real-analytic, it is enough to prove that $u$ is $0$ in a neighborhood of $0$ in $\Omega$.
In such a neighborhood, $u$ has a Fourier expansion of the form
$$
u(r \cos\theta , r \sin \theta )=\sum_{m\in\Z} c_me^{im\theta}\ffi _{|m|}(r),
$$
where $\ffi_m$ is the unique normalized  solution of
$$
\ffi_m^{\prime\prime}(r)+\frac{1}{r}\ffi_m^{\prime}(r)
+\left(h(r)-\frac{m^2}{r^2}\right) \ffi _m(r) = 0 
$$
that is smooth at $0$. It is not hard to see that
$\ffi_m(r)\simeq r^{m}$ when $r\to0$.
Thus the non-crossing lemma applies and yields the stated  conclusion. 
\end{proof}

\section{Robin type conditions} \label{sec:3}

We next study the Helmholtz equation with Robin-type conditions on two
intersecting curves $\Gamma _1$ and $\Gamma _2$ in $\R ^2$. 
\subsection{The main result}
We write the solution of
the Helmholtz-Laplace  equation in polar coordinates
$$
u_p (r,\theta ) = u(r \cos \theta , r \sin \theta ) \, .
$$
Recall that the gradient
in polar coordinates is given by $\nabla F=(\partial_r F, r^{-1}\partial_\theta F)$.

We  parametrize the curves in polar coordinates as $\gamma (r) = (r
\cos \theta (r) , r \sin \theta (r))$ with the radius $r$  as the variable. By $\partial _t
u_p$ and $\partial _n u_p$ we denote the tangential and normal
derivatives of $u_p$ along a curve. A simple computation shows that
\begin{equation}
    \label{3.4a}
\partial_{t}u_p\bigl(r,\theta(r)\bigr)
=\partial_{r}u_p\bigl(r,\theta(r)\bigr)
+\theta^\prime(r)\partial_{\theta}u_p\bigl(r,\theta(r)\bigr)
  \end{equation}
and
\begin{equation}
  \label{eq:xy}
\partial_{n}u_p\bigl(r,\theta(r)\bigr)
=-r\theta^\prime(r)\partial_{r}u_p\bigl(r,\theta(r)\bigr)
+\frac{1}{r}\partial_{\theta}u_p\bigl(r,\theta(r)\bigr) \, .
\end{equation}
We will work with Robin type
conditions of the form $\alpha u_p + \beta \partial _t u_p + \tilde
\beta \partial _n u_p = 0$ on two  curves $\Gamma _1$ and $\Gamma _2$
and allow the coefficients $\alpha , \beta , \tilde \beta $ to be
functions of $r$.

\begin{theorem}\label{th:Robin2}
Let $\Omega\subset\mathbb{R}^2$ be a domain with $B(0,r_0)\subset\Omega$
for some $r_0>0$. Let $k\in\R$ and for $j=1,2$ let
$\alpha_j,\beta_j,\tilde\beta_j$ be $\cc^1$-functions
$(-r_0,r_0)\to\R$.

When $\beta_j(0)=\tilde\beta_j(0)=0$, we impose the following non-degeneracy:
$\alpha_j(0)\not=0$ and $\beta_j(r)=\tilde\beta_j(r)=o(r)$
when $r\to0$.

Next define $\ffi_j=0$ if $\beta_j(0)=\tilde\beta_j(0)=0$ and $\ffi_j=\arg\bigl(\beta_j(0)+i\tilde\beta_j(0)\bigr)$
otherwise.

Let $\theta_1,\theta_2$ be two $\cc^1$-functions $(-r_0,r_0)\to\R$
and assume that $\ffi_1-\ffi_2\notin
\bigl(\theta_1(0)-\theta_2(0)\bigr)\Z+\pi\Z$. 
Let $\Gamma_j=\{ (r \cos \theta_j(r), r\sin \theta _j(r)): \ r\in(-r_0,r_0)\}$ be the corresponding curves in polar coordinates in $\Omega$.

Let $u$ be a solution of the 
Helmholtz-Laplace equation
\begin{equation}
\label{eq:helmrob}
\Delta u(x,y)+k^2 u(x,y)=0, \quad \quad (x,y)\in\Omega,
\end{equation}
with Robin-type  conditions
\begin{equation}
	\label{eq:Robinthm}
\alpha_j(r) u\bigl(r,\theta_j(r)\bigr)
+\beta_j(r)\partial_tu\bigl(r,\theta_j(r)\bigr)
+\tilde\beta_j(r)\partial_nu\bigl(r,\theta_j(r)\bigr)=0
\end{equation}
on $\Gamma_j$ for $j=1,2$.
If $u(0,0)=0$ then $u=0$.
\end{theorem}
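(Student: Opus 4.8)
The plan is to follow the strategy of the non-crossing lemma, expanding $u$ in a polar Fourier series and eliminating the coefficients one conjugate pair at a time by induction. Since $u$ solves $\Delta u + k^2 u = 0$ near $0$, it has a local expansion
\[
u(r,\theta) = \sum_{m\in\Z} c_m R_{|m|}(r)\, e^{im\theta},
\]
where $R_{|m|}$ is the normalized radial solution, smooth at $0$, with $R_{|m|}(r) = r^{|m|}(1+o(1))$ and $R_{|m|}'(r) = |m|\,r^{|m|-1}(1+o(1))$ (for $k=0$ this is $r^{|m|}$, for $k\neq 0$ a multiple of $J_{|m|}(kr)$), exactly as in the proof of Corollary~\ref{cor:33}. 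Since $R_{|m|}(0)=0$ for $m\neq 0$, the hypothesis $u(0,0)=0$ forces $c_0 = 0$, which is the base case. I then argue by induction: assuming $c_m = 0$ for $0\leq |m| < n$, I show $c_n = c_{-n} = 0$.

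For the inductive step I isolate the leading order of each ingredient of the Robin condition \eqref{eq:Robinthm} along $\Gamma_j$. Writing the surviving leading part of $u$ as $v(r,\theta) = (c_n e^{in\theta} + c_{-n}e^{-in\theta})\,r^n$, differentiating, substituting $\theta = \theta_j(r)$ and using \eqref{3.4a} and \eqref{eq:xy}, a direct computation gives as $r\to 0$
\begin{align*}
u\bigl(r,\theta_j(r)\bigr) &= \bigl(c_n e^{in\theta_j(0)} + c_{-n}e^{-in\theta_j(0)}\bigr)\,r^n + o(r^n),\\
\partial_t u\bigl(r,\theta_j(r)\bigr) &= n\bigl(c_n e^{in\theta_j(0)} + c_{-n}e^{-in\theta_j(0)}\bigr)\,r^{n-1} + o(r^{n-1}),\\
\partial_n u\bigl(r,\theta_j(r)\bigr) &= in\bigl(c_n e^{in\theta_j(0)} - c_{-n}e^{-in\theta_j(0)}\bigr)\,r^{n-1} + o(r^{n-1}),
\end{align*}
where I have used that in \eqref{3.4a} the term $\partial_r u$ dominates and in \eqref{eq:xy} the term $r^{-1}\partial_\theta u$ dominates. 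The crucial feature is the mismatch of orders: the two derivative terms are of size $r^{n-1}$ while the undifferentiated term is only of size $r^n$.

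This order mismatch is exactly what the hypotheses on $\alpha_j,\beta_j,\tilde\beta_j$ are designed to exploit, and I split the analysis into the two cases defining $\ffi_j$. If $(\beta_j(0),\tilde\beta_j(0))\neq(0,0)$, the derivative terms dominate: dividing \eqref{eq:Robinthm} by $r^{n-1}$ and letting $r\to 0$ leaves only $\beta_j(0)\partial_t u + \tilde\beta_j(0)\partial_n u$, and writing $\beta_j(0)+i\tilde\beta_j(0) = \rho_j e^{i\ffi_j}$ regroups the resulting expression as $n\rho_j\bigl(c_n e^{i\psi_j} + c_{-n}e^{-i\psi_j}\bigr)$ with $\psi_j := n\theta_j(0)+\ffi_j$. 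If instead $\beta_j(0)=\tilde\beta_j(0)=0$, the non-degeneracy assumption $\beta_j(r),\tilde\beta_j(r)=o(r)$ makes both derivative contributions $o(r)\cdot r^{n-1}=o(r^n)$, so after dividing by $r^n$ only $\alpha_j(0)u$ survives; since $\alpha_j(0)\neq 0$ and here $\ffi_j = 0$, this again yields $c_n e^{i\psi_j} + c_{-n}e^{-i\psi_j}=0$. Thus in both cases curve $\Gamma_j$ contributes the single equation
\[
c_n e^{i\psi_j} + c_{-n}e^{-i\psi_j} = 0, \qquad j=1,2.
\]

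It then remains to solve this $2\times 2$ system in $(c_n,c_{-n})$. Its determinant is $e^{i(\psi_1-\psi_2)}-e^{-i(\psi_1-\psi_2)} = 2i\sin(\psi_1-\psi_2)$, and since $\psi_1-\psi_2 = n\bigl(\theta_1(0)-\theta_2(0)\bigr)+(\ffi_1-\ffi_2)$, this is nonzero precisely when $\ffi_1-\ffi_2 \neq -n\bigl(\theta_1(0)-\theta_2(0)\bigr) + \pi\Z$. The hypothesis $\ffi_1-\ffi_2\notin\bigl(\theta_1(0)-\theta_2(0)\bigr)\Z+\pi\Z$ guarantees this for every $n\geq 1$ (take the integer coefficient to be $-n$), so $c_n=c_{-n}=0$ and the induction closes. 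Hence $u\equiv 0$ near $0$, and by real-analyticity of solutions of the Helmholtz-Laplace equation, $u=0$ on all of $\Omega$. I expect the main difficulty to lie in the asymptotic bookkeeping of the second step: one must track the competing powers $r^{n-1}$ and $r^n$ carefully and check that the definition of $\ffi_j$---together with the $o(r)$ hypothesis in the degenerate case---is precisely calibrated so that both cases produce the same equation with the same combined phase $\psi_j$.
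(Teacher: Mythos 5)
Your proposal is correct and follows essentially the same route as the paper's proof: a polar Fourier/Bessel expansion of $u$, induction eliminating the pairs $c_{\pm n}$ by extracting the leading-order asymptotics of $u$, $\partial_t u$, $\partial_n u$ along each $\Gamma_j$ (with the derivative terms of order $r^{n-1}$ dominating the $\alpha_j u$ term of order $r^n$ unless $\beta_j,\tilde\beta_j=o(r)$), and the nonvanishing $2\times 2$ determinant $2i\sin\bigl(n(\theta_1(0)-\theta_2(0))+\ffi_1-\ffi_2\bigr)$. Your single combined phase $\psi_j=n\theta_j(0)+\ffi_j$ merely merges the paper's three case distinctions (i)--(iii) and its separate $k=0$ and $k\neq 0$ computations (carried out there via explicit Bessel recurrences) into one normalized radial family $R_{|m|}$, which is a cosmetic streamlining rather than a different argument.
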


\begin{remark} 
The condition $\ffi_1-\ffi_2\notin
\bigl(\theta_1(0)-\theta_2(0)\bigr)\Z+\pi\Z$ can be seen as either
excluding a countable set 
of initial conditions on $\ffi_1-\ffi_2$ or a countable set of
directions $\theta_1(0)-\theta_2(0)$ at the origin. 
For instance, if $\beta_1(0)=\beta_2(0)=0$ or
$\tilde\beta_1(0)=\tilde\beta_2(0)=0$ we obtain the condition
$\theta_1(0)-\theta_2(0)\notin \Q\pi$. 

The statement would be more natural with
$\alpha_j,\beta_j,\tilde\beta_j$ constants. However, in order to
apply  Theorem~\ref{th:Robin2} to  the Helmholtz equation
on manifolds, we need this more general condition. 
Also, the non-degeneracy is then needed, since any solution of course
satisfies a Robin-type condition of the type \eqref{eq:Robinthm}.

\end{remark}

\begin{proof}
First note that, as $u$ is real-analytic,  it is enough to show that $u=0$ in $B(0,r_0)$.

\medskip

\noindent{\bf Case 1: $k=0$.} We first consider the case $k=0$ corresponding to the Laplace equation.
Locally 
in a neighborhood of $0$ and in polar coordinates, its solution has
the following series  expansion~\cite{MF} (with
$u_p(r,\theta)=u(r\cos\theta,r\sin\theta)$): 
\begin{equation}
	\label{eq:pollap}
u_p(r,\theta)=\sum_{m\in\mathbb{Z}}c_m r^{|m|}e^{im\theta}.
\end{equation}
Let us note right away that $c_0=u(0,0)=0$.

It follows that
\begin{eqnarray*}
	\partial_{r}u_p(r,\theta)&=&
	\sum_{m=1}^\infty m\bigl(
	c_m e^{im\theta}+c_{-m} e^{-im\theta}\bigr)r^{m-1}, \\
    \frac{1}{r}\partial_{\theta}u_p(r,\theta)&=&
   \sum_{m=1}^\infty im\bigl(
	c_m e^{im\theta}-c_{-m} e^{-im\theta}\bigr)r^{m-1} .
\end{eqnarray*}

Now suppose  that $c_{-m_0}=\cdots=c_{m_0}=0$ for some $m_0\geq0$. Then
by looking at the lowest order terms in $r$ we see from \eqref{3.4a}
and  \eqref{eq:xy}  that 
\begin{align} \label{hel1}
u_p\bigl(r,\theta_j(r)\bigr)
&=\bigl(c_{m_0+1}e^{i(m_0+1)\theta _j}+c_{-(m_0+1)}e^{-i(m_0+1)\theta
_j}\bigr)r^{m_0+1}+o(r^{m_0+1})
\\
\partial_{t}u_p\bigl(r,\theta_j(r)\bigr)
&= (m_0+1) \bigl(c_{m_0+1}e^{i(m_0+1)\theta
  _j}+c_{-(m_0+1)}e^{-i(m_0+1)\theta _j}\bigr)r^{m_0}+o(r^{m_0})
\notag \\
\partial_{n}u_p\bigl(r,\theta_j(r)\bigr)
&=i (m_0+1)
\bigl(c_{m_0+1}e^{i(m_0+1)\theta _j }-c_{-(m_0+1)}e^{-i(m_0+1)\theta
  _j }\bigr)r^{m_0}+o(r^{m_0}).   \notag 
\end{align}
It follows that
\begin{align*}
\alpha_j(r)u_p\bigl(r,\theta_j(r)\bigr)
&+\beta_j(r)\partial_{t}u_p\bigl(r,\theta_j(r)\bigr)
+\tilde\beta_j(r)\partial_{n}u_p\bigl(r,\theta_j(r)\bigr)\\
&=\alpha_j(r)\bigl(c_{m_0+1}e^{i(m_0+1)\theta _j
}+c_{-(m_0+1)}e^{-i(m_0+1)\theta _j}\bigr)r^{m_0+1}\\
  &+ (m_0+1)\bigl(\beta_j(r)+i\tilde
  \beta_j(r))c_{m_0+1}e^{i(m_0+1)\theta _j }r^{m_0}\\
 &+ (m_0+1)\bigl(\beta_j(r)-i\tilde
 \beta_j(r))c_{-(m_0+1)}e^{-i(m_0+1)\theta _j }r^{m_0} \\
 &+o(r^{m_0+1})+o\Bigl(\bigl(\beta_j(r)+i\tilde
 \beta_j(r))r^{m_0}\Bigr) .
\end{align*}

We now distinguish two cases: 


-- either $\bigl(\beta_j(0),\tilde \beta_j(0)\bigr)\not=(0,0)$ and then
$\kappa_j:=\beta_j(0)+i\tilde \beta_j(0)\not=0$ so that
\begin{multline*}
\alpha_j(r)u_p\bigl(r,\theta_j(r)\bigr)
+\beta_j(r)\partial_{t}u_p\bigl(r,\theta_j(r)\bigr)
+\tilde\beta_j(r)\partial_{n}u_p\bigl(r,\theta_j(r)\bigr)\\
= (m_0+1)\bigl(c_{m_0+1}\kappa_j
e^{i(m_0+1)\theta _j}+c_{-(m_0+1)}\overline{\kappa_j}
e^{-i(m_0+1)\theta _j }\bigr)r^{m_0}+o(r^{m_0}) , 
\end{multline*}

-- or $\bigl(\beta_j(0),\tilde \beta_j(0)\bigr)=(0,0)$, and then we assumed that
$\beta_j(r)\pm i\tilde \beta_j(r)=o(r)$,   thus
\begin{multline*}
\alpha_j(r)u_p\bigl(r,\theta_j(r)\bigr)
+\beta_j(r)\partial_{t}u_p\bigl(r,\theta_j(r)\bigr)
+\tilde\beta_j(r)\partial_{n}u_p\bigl(r,\theta_j(r)\bigr)\\
=\alpha_j(0)\bigl(c_{m_0+1}e^{i(m_0+1)\theta _j
}+c_{-(m_0+1)}e^{-i(m_0+1)\theta _j }\bigr)r^{m_0+1}
+o(r^{m_0+1}).
\end{multline*}

It follows that each  Robin condition
$$
\alpha_j(r)u_p\bigl(r,\theta_j(r)\bigr)
+\beta_j(r)\partial_{t}u_p\bigl(r,\theta_j(r)\bigr)
+\tilde\beta_j(r)\partial_{n}u_p\bigl(r,\theta_j(r)\bigr)
=0
$$
implies that 
$$
\begin{cases}
	c_{m_0+1}\kappa_j e^{i(m_0+1)\theta_j(0)}+c_{-(m_0+1)}\overline{\kappa_j} e^{-i(m_0+1)\theta_j(0)}=0
	&\mbox{if }   \kappa _j \neq 0 , \\ 
	c_{m_0+1}e^{i(m_0+1)\theta_j(0)}+c_{-(m_0+1)}e^{-i(m_0+1)\theta_j(0)}=0
	&\mbox{if } \kappa _j = 0 . 
\end{cases}
$$
Now, if we impose  two Robin conditions, then every pair 
$c_{-(m_0+1)},c_{m_0+1}$ is the  solution of  one of the following linear
systems.  

(i)  If $\bigl(\beta_j(0),\tilde \beta_j(0)\bigr)\not=(0,0)$ for $j=1$ and $j=2$, then
$$
\left\{
\begin{matrix}
	c_{m_0+1}\kappa_1
        e^{i(m_0+1)\theta_1(0)}&+&c_{-(m_0+1)}\overline{\kappa_1}
        e^{-i(m_0+1)\theta_1(0)}&=&0 ,\\
	c_{m_0+1}\kappa_2
        e^{i(m_0+1)\theta_2(0)}&+&c_{-(m_0+1)}\overline{\kappa_2}
        e^{-i(m_0+1)\theta_2(0)}&=&0 , 
\end{matrix}\right.
$$
which has determinant $2i \mathrm{Im}\ \kappa _1 \overline{\kappa _2}
e^{i (m_0+1) (\theta _1 - \theta _2)} =
  2i|\kappa_1\kappa_2|\sin\bigl(\varphi _1 - \varphi _2 +
  (m_0+1)\bigl(\theta_1(0)-\theta_2(0)\bigr)$.  This determinant is
  never $0$ by the assumption on $\theta_1(0)-\theta_2(0)$. Therefore
  $c_{m_0+1}=c_{-(m_0+1)}=0$. 

(ii)  If $\bigl(\beta_j(0),\tilde \beta_j(0)\bigr)=(0,0)$ for $j=1,2$,
then 
$$
\left\{
\begin{matrix}
	c_{m_0+1}
        e^{i(m_0+1)\theta_1(0)}&+&c_{-(m_0+1)}e^{-i(m_0+1)\theta_1(0)}&=&0
        , \\
	c_{m_0+1}
        e^{i(m_0+1)\theta_2(0)}&+&c_{-(m_0+1)}e^{-i(m_0+1)\theta_2(0)}&=&0 .
\end{matrix}\right.
$$
Again the determinant of this system is $2i\sin (m_0+1)\bigl(\theta_1(0)-\theta_2(0)\bigr)\not=0$
by our assumption on $\theta_1(0)-\theta_2(0)$. Therefore $c_{m_0+1}=c_{-(m_0+1)}=0$.

(iii)  If $\bigl(\beta_1(0),\tilde \beta_1(0)\bigr)=(0,0)$ and $\bigl(\beta_2(0),\tilde \beta_2(0)\bigr)\not=(0,0)$
(without loss of generality), then 
$$
\left\{
\begin{matrix}
	c_{m_0+1}
        e^{i(m_0+1)\theta_1(0)}&+&c_{-(m_0+1)}e^{-i(m_0+1)\theta_1(0)}&=&0
        , \\
	c_{m_0+1}\kappa_2
        e^{i(m_0+1)\theta_2(0)}&+&c_{-(m_0+1)}\overline{\kappa_2}
        e^{-i(m_0+1)\theta_2(0)}&=&0 , 
\end{matrix}\right.
$$
which has determinant $ 2i \mathrm{Im}\, \overline{\kappa _2}
  e^{i(m_0+1) (\theta _1(0) - \theta _2(0))} = 2i|\kappa_2|\sin\bigl( -\phi _2 + (m_0+1)\bigl(\theta_1(0)-\theta_2(0)\bigr)\not=0$
by assumption on $\theta_1(0)-\theta_2(0)$. Therefore $c_{m_0+1}=c_{-(m_0+1)}=0$.

In all three  cases, we obtain  that $c_{m_0+1} = c_{-(m_0+1)}=  0$, and by
induction,  it follows that $c_m=0$ for every $m$, thus $u=0$.

\medskip

\noindent{\bf Case 2: $k\neq 0$.}
We next treat the Helmholtz equation with Robin-type
conditions on two curves. After a change of variables,  we may assume
without loss of generality that $k=1$. Then 
in a neighborhood of $0$,   the solution written  in polar coordinates
possesses a Bessel expansion of the form 
\begin{equation}
	\label{eq:polhelm}
u_p(r,\theta)=\sum_{m\in\mathbb{Z}}c_m J_{|m|}(r)e^{im\theta},
\end{equation}
where $J_m$ is the Bessel function of the first kind and order $m$.

Since $J_m(0) = 0$ for $m\neq 0$, we obtain  that $c_0=u(0,0)=0$.

Using the relation for the Bessel functions
\[
\frac{m}{r}J_m(r)=\frac{1}{2}(J_{m+1}(r)+J_{m-1}(r)),
\]
we obtain
\begin{eqnarray*}
\frac{1}{r}\partial_\theta u_p(r,\theta)&=&
\sum_{m\ge 1}(c_m e^{im\theta}-c_{-m}e^{-im\theta})\frac{im}{r}J_m(r)\\
&=&\sum_{m\ge 1}(c_m e^{im\theta}-c_{-m}e^{-im\theta})\frac{i}{2}(J_{m+1}(r)+J_{m-1}(r))\\
&=&\frac{i}{2}\sum_{m\ge 0}(c_{m+1} e^{i(m+1)\theta}-c_{-m-1}e^{-i(m+1)\theta})J_m(r)\\
&&+\frac{i}{2}\sum_{m\ge 2}(c_{m-1} e^{i(m-1)\theta}-c_{-m+1}e^{-i(m-1)\theta})J_m(r)\\
&=&\frac{i}{2}\bigl(c_1e^{i\theta}-c_{-1}e^{-i\theta}\bigr)J_0(r)+\frac{i}{2}\bigl(c_2e^{2i\theta}-c_{-2}e^{-2i\theta}\bigr)J_1(r)\\
&&+\frac{i}{2}\sum_{k\ge 2}\Bigl(c_{k+1}e^{i(k+1)\theta}+c_{k-1}e^{i(k-1)\theta}\\
&&\qquad-c_{-(k-1)}e^{-i(k-1)\theta}-c_{-(k+1)}e^{-i(k+1)\theta}\Bigr)J_k(r).
\end{eqnarray*}

In the same way, using the relation for the Bessel functions
\[
\frac{\mathrm{d}}{\mathrm{d}r}J_m(r)=
\frac{1}{2}(J_{m+1}(r)-J_{m-1}(r))
\]
for  $m\not=0$, we obtain
\begin{eqnarray*}
\partial_r u_p(r,\theta)&=&
\frac{1}{2}
\sum_{m\ge 1}(c_m e^{im\theta}+c_{-m}e^{-im\theta})
\bigl(J_{m+1}(r)-J_{m-1}(r)\bigr)\\
&=&-\frac{1}{2}(c_1 e^{i\theta}+c_{-1}e^{-i\theta})J_0(r)
-\frac{1}{2}(c_2 e^{2i\theta}+c_{-2}e^{-2i\theta})
J_1(r)\\
&&+\frac{1}{2}
\sum_{k\ge 2}(-c_{k+1} e^{i(k+1)\theta}+c_{k-1} e^{i(k-1)\theta}\\
&&\qquad\qquad+c_{-(k-1)}e^{-i(k-1)\theta}-c_{-(k+1)} e^{-i(k+1)\theta})J_k(r).
\end{eqnarray*}

Recall that $J_m(r)=\dst\frac{r^m}{2^m m!}+o(r^m)$ when $r\to 0$.
Suppose now that $c_{-m_0}=\cdots=c_{m_0}=0$ for some $m_0\geq0$, then
the terms of lowest order in $r$ in \eqref{3.4a} and  \eqref{eq:xy} yield 
\begin{multline*}
u_p\bigl(r,\theta_j(r)\bigr)
=\frac{1}{2^{m_0+1}(m_0+1)!}\bigl(c_{m_0+1}e^{i(m_0+1)\theta
  _j}+c_{-(m_0+1)}e^{-i(m_0+1)\theta _j }\bigr)r^{m_0+1}+o(r^{m_0+1})
\\
\partial_{t}u_p\bigl(r,\theta_j(r)\bigr)
=
\frac{-1}{2^{m_0+1}m_0!}\bigl(c_{m_0+1}e^{i(m_0+1)\theta _j
}+c_{-(m_0+1)}e^{-i(m_0+1)\theta _j }\bigr)r^{m_0}+o(r^{m_0})
\\
\partial_{n}u_p\bigl(r,\theta_j(r)\bigr)
=\frac{i}{2^{m_0+1}m_0!}\bigl(c_{m_0+1}e^{i(m_0+1)\theta
  _j}-c_{-(m_0+1)}e^{-i(m_0+1)\theta _j }\bigr)r^{m_0}+o(r^{m_0})
\, .
\end{multline*}
Thus the terms of lowest order in $r$ are exactly a fixed multiple of
the terms in \eqref{hel1} for the Laplace equation. 

Consequently the Robin condition~\eqref{eq:Robinthm} implies the same systems of
equations as in the previous case. Thus the remainder of the proof is
exactly the same and we conclude that $u= 0$.  
\end{proof}

\begin{remark} \label{rem3}
If we replace the exact condition
	$$
	\alpha_j(r) u\bigl(r,\theta_j(r)\bigr)
+\beta_j(r)\partial_tu\bigl(r,\theta_j(r)\bigr)
+\tilde\beta_j(r)\partial_nu\bigl(r,\theta_j(r)\bigr)=0,
$$
by an approximate one
	$$
	\alpha_j(r) u\bigl(r,\theta_j(r)\bigr)
+\beta_j(r)\partial_tu\bigl(r,\theta_j(r)\bigr)
+\tilde\beta_j(r)\partial_nu\bigl(r,\theta_j(r)\bigr)=o(r^N),
$$
and assume that $(\beta_j(0), \tilde \beta _j(0)) \neq (0,0)$, then we may still conclude that
$c_{-N-1}=\cdots=c_{N+1}=0$ in the decompositions
\eqref{eq:pollap}-\eqref{eq:polhelm}.
As a consequence, $u(x)=o(\abs{x}^{N+1})$.
\end{remark}

The case of the Helmholtz equation on a manifold with Robin conditions
can be deduced 
by following Cheng's argument. 
\begin{corollary}
Let $\mathcal{M}$ be a two-dimensional $\cc^\infty$-Riemann manifold 
without boundary and $h\in\cc^\infty(M)$.

Let $\omega\in\mm$ and $\gamma_1,\gamma_2$ be two geodesics on $\mm$ that intersect at $\omega$ such that the angle of their tangents at $\omega$
is an irrational multiple of $\pi$.
Let $a_1,a_2,b_1,b_2\in\R$ such that
$(a_j,b_j)\not=(0,0)$. 

If $u$ is a solution of the Helmholtz equation $\Delta u+h u=0$ with $u(\omega)=0$
and satisfies the two Robin conditions
\begin{equation}
	\label{eq:Robincor}
a_j u+b_j\partial_n u=0\quad\mbox{on }\gamma_j,\quad j=1,2,
\end{equation}
then $u=0$.
\end{corollary}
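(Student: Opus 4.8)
The plan is to transport the equation to the Euclidean plane, where the variable potential will be absorbed into lower-order remainders by Bers' representation, and then to run the leading-order determinant computation of Theorem~\ref{th:Robin2}.

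First I would restrict to the connected component of $\mm$ containing $\omega$ and choose isothermal (conformal) coordinates centered at $\omega$, so that $g=\lambda^2(dx^2+dy^2)$ for some smooth $\lambda>0$. Then $\Delta_g=\lambda^{-2}\Delta$, and the equation $\Delta_g u+hu=0$ turns into the planar Schr\"odinger equation $\Delta u+\tilde h\,u=0$ with $\tilde h=\lambda^2h$ smooth. The geodesics $\gamma_1,\gamma_2$ become smooth curves $\Gamma_1,\Gamma_2$ through $0$; since a conformal change of metric preserves angles, the Euclidean angle between their tangents at $0$ equals the Riemannian one, hence is still an irrational multiple of $\pi$. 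Moreover the Riemannian unit normal equals $\lambda^{-1}$ times the Euclidean one, so each condition $a_ju+b_j\partial_nu=0$ becomes $\alpha_j u+\tilde\beta_j\partial_n u=0$ with $\alpha_j=a_j$, $\beta_j\equiv0$ and $\tilde\beta_j(r)=b_j/\lambda(\Gamma_j(r))$ a $\cc^1$ function. This is exactly the setting of Theorem~\ref{th:Robin2}, save for the fact that $\tilde h$ is not constant.

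Second, to dispose of the non-constant $\tilde h$ I would follow Cheng's argument and quote Bers' local representation~\cite{Be} (see also \cite{Ch}): a solution of a second order elliptic equation whose principal part is $\Delta$ and which does not vanish identically has a finite vanishing order $n_0$ at $0$, and its leading term is a nonzero homogeneous harmonic polynomial, \ie in polar coordinates
\[
u(r,\theta)=r^{n_0}\bigl(c_{n_0}e^{in_0\theta}+c_{-n_0}e^{-in_0\theta}\bigr)+o(r^{n_0}),\qquad (c_{n_0},c_{-n_0})\neq(0,0),
\]
together with the matching first-order asymptotics $\nabla u=\nabla\bigl[r^{n_0}(c_{n_0}e^{in_0\theta}+c_{-n_0}e^{-in_0\theta})\bigr]+o(r^{n_0-1})$. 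Since $u(\omega)=0$ we have $n_0\geq1$. The point is that $\tilde h$ contributes only to the remainders, so the leading-order terms are exactly those of a harmonic function.

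Third, I would feed this leading part into Case~1 ($k=0$) of the proof of Theorem~\ref{th:Robin2}. Inserting the homogeneous harmonic polynomial into \eqref{3.4a}--\eqref{eq:xy} and keeping the lowest power of $r$, the two Robin conditions yield for $(c_{n_0},c_{-n_0})$ precisely the $2\times2$ linear systems of that proof, now with $\beta_j\equiv0$, $\alpha_j=a_j$ and $\kappa_j=i\tilde\beta_j(0)$. Because $(a_j,b_j)\neq(0,0)$, the degenerate case $\tilde\beta_j(0)=0$ forces $a_j\neq0$, so the non-degeneracy hypothesis is met; and since $\beta_1(0)=\beta_2(0)=0$, the remark following Theorem~\ref{th:Robin2} shows that the determinant of each such system is a nonzero multiple of $\sin\bigl(n_0(\theta_1(0)-\theta_2(0))\bigr)$ or $\cos\bigl(n_0(\theta_1(0)-\theta_2(0))\bigr)$, which never vanishes because $\theta_1(0)-\theta_2(0)\notin\pi\Q$. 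Hence $c_{n_0}=c_{-n_0}=0$, contradicting the definition of $n_0$. Thus $u$ vanishes to infinite order at $\omega$, and the strong unique continuation property for second order elliptic operators with smooth coefficients (Aronszajn) forces $u\equiv0$ on the component of $\mm$ containing $\omega$.

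The main obstacle is the second step: I must invoke the Bers--Hartman--Wintner expansion in a form that controls not only $u$ but also its first derivatives, since the normal-derivative part of the Robin condition sees the order-$r^{n_0-1}$ term; and I must check that the discrepancy between the Riemannian and the Euclidean normal (of relative size $O(\abs{x}^2)$) together with the conformal factor $\lambda$ do not disturb the lowest-order balance. Once this is granted, the remaining algebra is identical to that already carried out for Theorem~\ref{th:Robin2}.
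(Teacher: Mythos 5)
Your proposal is correct and takes essentially the same route as the paper's proof: localize near $\omega$, invoke Bers' local representation to extract a non-trivial leading harmonic polynomial (with matching gradient asymptotics), observe that the geometric Robin conditions become conditions of type \eqref{eq:Robinthm} with $\beta_j\equiv 0$, $\tilde\beta_j(0)$ proportional to $b_j$ (non-degeneracy being automatic when $b_j=0$), run the leading-order determinant induction from the proof of Theorem~\ref{th:Robin2} (via Remark~\ref{rem3}) to force that polynomial to vanish, and conclude by strong unique continuation. The only deviation is cosmetic: the paper straightens the geodesics using normal coordinates whereas you keep them curved in isothermal coordinates, which is equally admissible in the theorem's framework and in fact makes the reduction exact --- $\Delta_g=\lambda^{-2}\Delta$ and $\partial_{n,g}=\lambda^{-1}\partial_n$ hold identically, so the discrepancy you flag as the ``main obstacle'' in your last paragraph is only an $O(r^2)$ renormalization of the coefficient functions and is harmlessly absorbed into the $\cc^1$ data $\alpha_j,\tilde\beta_j$.
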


\begin{proof}
	The proof follows the path of Cheng's result on nodal domains
	\cite[Thm.~2.2]{Ch}. First, we use normal coordinates on $\mm$
	which allows us to restrict  $u$ to  a small neighborhood of
        $0$, $u(0)=0$, such  that $\gamma_1,\gamma_2$ are two straight
        lines intersecting at $0$
with angle that is an irrational multiple of $\pi$.

Further, Bers' result~\cite{Be} implies that we can write 
$$
u(x,y)=p(x,y)+(x^2+y^2)^{m/2+\epsilon}q(x,y)
$$
where
$p$ is an harmonic polynomial of degree $m$ and $q$ is
smooth. 
 Further, $u$  cannot vanish to infinite order at $0$ unless $u=0$
uniformly. Thus we may assume that $p\not=0$ if $u\not=0$. 

One then easily sees  that the Robin condition \eqref{eq:Robincor}
implies a Robin condition of the type given by \eqref{eq:Robinthm}
with $\alpha_j(0)=a_j$, $\beta_j(0)=0$ and $\tilde\beta_j(0)=b_j$. Moreover,
if $b_j=0$ then $\beta_j(r)=\tilde\beta_j(r)=0$ so that the non-degeneracy condition is satisfied.

Now, according to Remark \ref{rem3}, this Robin condition implies that 
$u(x,y)=(x^2+y^2)^{m/2+\epsilon}q(x,y)$ and $p=0$. We thus conclude that $u=0$.
\end{proof}

\subsection{The condition $u(0,0)=0$}

One may ask whether the condition $u(0,0)=0$ is really needed in
Theorem \ref{th:Robin2}. This hypothesis was the basis for the
induction and therefore crucial to obtain $u=0$ in the previous
proof. 

We now  show that this condition may not always be removed, not even in
the simple case of constant coefficients in the Robin condition. 

\begin{proposition}\label{prop:Robin}
Let $\Omega\subset\mathbb{R}^2$ be a domain with $0\in\Omega$. Let $k\in\R$ and
$\alpha_1,\alpha_2,\beta_1,\beta_2\in\R$ with $\beta_1,\beta_2\not=0$.
Let $\theta_1,\theta_2\in\R$ be such that $\theta_1-\theta_2\notin \pi\mathbb{Q}$.
Let $l_{\theta_i}=\R(\cos\theta_j,\sin\theta_j)$ be lines passing through 0.
Then the space of solutions of the
Helmholtz equation with Robin boundary conditions:
\begin{equation}
\left\{\begin{array}{ll}\Delta u(x,y)+k^2 u(x,y)=0,& (x,y)\in\Omega,\\
\alpha_1 u(x,y)+\beta_1\partial_nu(x,y)=0,&(x,y)\in \ell_{\theta_1}\\
\alpha_2 u(x,y)+\beta_2\partial_nu(x,y)=0,&(x,y)\in \ell_{\theta_2},\\\end{array}\right.	
\label{systprop35}
\end{equation}
has dimension at most $1$.

The solution space has dimension exactly one, if  $\dst\frac{\theta_1-\theta_2}{\pi}$
is badly approximable by rationals in 
the sense that
there is a $c>\sqrt{5}$ such that for all but finitely many integers $k,\ell$,
\begin{equation}
	\label{eq:badapprox}
|k(\theta_1-\theta_2)-\ell\pi|\geq \frac{1}{ck} \, .
\end{equation}
\end{proposition}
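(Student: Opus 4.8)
The plan is to separate variables in polar coordinates and match coefficients. After the scaling used in Case~2 of the proof of Theorem~\ref{th:Robin2} I may assume $k\in\{0,1\}$ and use the local expansions \eqref{eq:pollap}--\eqref{eq:polhelm}, $u_p(r,\theta)=\sum_{m\in\Z}c_m\varphi_{|m|}(r)e^{im\theta}$ with $\varphi_{|m|}(r)=r^{|m|}$ (Laplace) or $\varphi_{|m|}=J_{|m|}$ (Helmholtz). Each line $\ell_{\theta_j}$ is radial, so $\theta$ is constant along it and formulas \eqref{3.4a}--\eqref{eq:xy} (with $\theta_j'\equiv0$) give $\partial_tu_p=\partial_ru_p$ and $\partial_nu_p(r,\theta_j)=\frac1r\partial_\theta u_p(r,\theta_j)$; hence each Robin condition reads $\alpha_j u_p(r,\theta_j)+\frac{\beta_j}{r}\partial_\theta u_p(r,\theta_j)=0$ for all $r$. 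Reorganizing $\frac1r\partial_\theta u_p$ by the Bessel recurrence $\frac mr J_m=\frac12(J_{m+1}+J_{m-1})$ exactly as in the proof of Theorem~\ref{th:Robin2}, and using that the $\varphi_k$ have distinct leading orders $r^k$ (so $\sum_k a_k\varphi_k\equiv0$ forces $a_k=0$), I extract for every $k\geq0$ and $j=1,2$ a scalar equation $(E_k^j)$ coupling only the neighbouring modes $c_{\pm(k-1)},c_{\pm k},c_{\pm(k+1)}$. Because $\varphi_{|m|}(-r)=(-1)^m\varphi_{|m|}(r)$, this single family of equations already encodes the condition on both rays of $\ell_{\theta_j}$.

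For the bound $\dim\leq1$ I read $(E_k^1),(E_k^2)$ together as a $2\times2$ system in the two highest unknowns $(c_{k+1},c_{-(k+1)})$, whose right-hand side involves only lower modes. Its determinant is a nonzero multiple of $\sin\bigl((k+1)(\theta_1-\theta_2)\bigr)$, which never vanishes since $\theta_1-\theta_2\notin\pi\Q$; this is the same determinant computation as in Theorem~\ref{th:Robin2}. Starting from $c_0=u(0,0)$ via $(E_0^j)$ and inducting on $k$, every coefficient $c_m$ with $m\neq0$ is therefore uniquely determined by $c_0$. Thus the linear map $u\mapsto u(0,0)$ is injective on the solution space---its kernel is trivial by Theorem~\ref{th:Robin2}---and the space has dimension at most $1$.

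For $\dim=1$ I reverse the construction: put $c_0=1$ and define the remaining $c_m$ by solving the successive $2\times2$ systems; the only point to verify is that $u_p=\sum_m c_m\varphi_{|m|}(r)e^{im\theta}$ converges near $0$, and this is where \eqref{eq:badapprox} enters. From $|\sin\pi t|\geq2\,\mathrm{dist}(t,\Z)$ and \eqref{eq:badapprox} I get $\bigl|\sin\bigl(n(\theta_1-\theta_2)\bigr)\bigr|\geq\frac{2}{\pi c\,n}$ for all but finitely many $n$, so the inverse of the $n$-th system has operator norm $O(n)$ in the Helmholtz case and $O(1)$ in the Laplace case (the latter gains a factor $n^2$ in the determinant). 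Inserting this into the recursion relations yields the growth bounds $|c_m|\leq A^{|m|}|m|!$ for Helmholtz and $|c_m|\leq A^{|m|}$ for Laplace. Since $J_{|m|}(r)=\frac{r^{|m|}}{2^{|m|}|m|!}\bigl(1+o(1)\bigr)$ (resp.\ the Laplace term is $r^{|m|}$), the factorials cancel and the series converges geometrically for $|r|$ small, producing a genuine solution. The real data $\alpha_j,\beta_j,c_0$ make the recursion preserve $c_{-m}=\overline{c_m}$, so this solution is real-valued and, having $u(0,0)=1\neq0$, nonzero; combined with the upper bound this gives $\dim=1$.

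The main obstacle is the convergence in the previous step: one must dominate the accumulated product of the norms $\|M_n^{-1}\|$ across all scales at once. For a general irrational $\theta_1-\theta_2$ the quantities $|\sin(n(\theta_1-\theta_2))|$ can be super-exponentially small along a subsequence (Liouville behaviour), which would force the constructed series to diverge for every $r>0$; the Diophantine hypothesis \eqref{eq:badapprox} is exactly what rules this out. The threshold $c>\sqrt5$ is the Hurwitz constant, below which no uniform lower bound $|\sin(n(\theta_1-\theta_2))|\gtrsim 1/n$ can hold for all large $n$, while the finitely many exceptional scales permitted by \eqref{eq:badapprox} only contribute a harmless constant to the estimates.
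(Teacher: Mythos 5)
Your proposal follows essentially the same route as the paper's proof: expand $u_p=\sum_{m\in\Z}c_m\varphi_{|m|}(r)e^{im\theta}$, use the Bessel recurrences to match coefficients of the $\varphi_k$ in the two Robin conditions, obtain $2\times2$ systems whose determinants are nonzero multiples of $\sin\bigl(m(\theta_1-\theta_2)\bigr)$ so that all $c_{\pm m}$ are determined linearly by $c_0$ (hence dimension at most $1$), and then invoke \eqref{eq:badapprox} to get $\abs{\sin\bigl(m(\theta_1-\theta_2)\bigr)}\gtrsim 1/m$, so the accumulated factor $O(m!)$ in $\abs{c_m}$ is absorbed by $\abs{J_m(r)}\leq r^m/(2^m m!)$ and the series converges near $0$, exactly as in the paper. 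Your extra touches --- the explicit treatment of the Laplace case with its $m^2$ gain in the determinant (which the paper leaves to the reader), the parity remark handling both rays of each line, and the observation that the recursion preserves $c_{-m}=\overline{c_m}$ so the solution is real --- are all correct but do not constitute a different approach.
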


\begin{remark}
The condition $c>\sqrt{5}$ comes from Hurwitz's theorem. If $\theta_1-\theta_2$ is an algebraic number of degree $2$, then
it is badly approximable by rationals.

As already noticed, if $\Delta u+k^2 u=0$ and $\partial_nu=0$ on $\ell_{\theta_1}$ and on $\ell_{\theta_2}$, then
$u$ is radial. It is well known that the only radial solutions of $\Delta u+k^2 u=0$ are constant when $k=0$
and constant multiples of $J_0(k\sqrt{x^2+y^2})$ when $k\not=0$. The above result extends this to more general Robin conditions.
\end{remark}

\begin{proof} 
We only treat the case $k=1$ and  leave the scaling to obtain $k\neq
0$ and  the easier case $k=0$ to
the reader. Again,    writing $u$
in polar coordinates as $u_p(r,\theta)=u(r\cos\theta,r\sin\theta)$,
the solution to the Helmholtz equation is of the form 
\[
u_p(r,\theta)=\sum_{m\in\mathbb{Z}}c_m J_{|m|}(r)e^{im\theta}.
\]
The previous proof  shows that the coefficients of $J_0$ and $J_1$ in
the Robin boundary condition for $j=1,2$ are given by 
\begin{eqnarray}
\alpha_jc_0+\frac{i\beta_j}{2}(c_1e^{i\theta_j}-c_{-1}e^{-i\theta_j})&=&0\label{eq:Robin1},\\
\alpha_j(c_1e^{i\theta_j}+c_{-1}e^{-i\theta_j})+\frac{i\beta_j}{2}(c_2e^{2i\theta_j}-c_{-2}e^{-2i\theta_j})&=&0,
\label{eq:Robin2}
\end{eqnarray}
and, for $m\geq 2$,
\begin{eqnarray}
\alpha_j( c_me^{im\theta_j}+c_{-m} e^{-im\theta_j})&&\nonumber\\
+\frac{i\beta_j}{2}\bigg(c_{m+1}e^{i(m+1)\theta_j}- c_{-m-1}e^{-i(m+1)\theta_j}&&\nonumber\\
\qquad+c_{m-1}e^{i(m-1)\theta_j}-c_{-m+1}e^{-i(m-1)\theta_j}\bigg)&=&0.\label{eq:Robin3}
\end{eqnarray}

Writing $\gamma_j=\dst 2i\frac{\alpha_j}{\beta_j}$,
\eqref{eq:Robin1} implies that
$$
\left\{\begin{matrix}
c_1e^{i\theta_1}&-&c_{-1}e^{-i\theta_1}&=&\gamma_1 c_0\\
c_1e^{i\theta_2}&-&c_{-1}e^{-i\theta_2}&=&\gamma_2 c_0 \, ,
\end{matrix}\right.
$$
from which follows 
$$
c_1=-\frac{\gamma_2e^{-i\theta_1}-\gamma_1e^{-i\theta_2}}{2i\sin (\theta_1-\theta_2)}c_0
\quad,\quad
c_{-1}=-\frac{\gamma_2e^{i\theta_1}-\gamma_1e^{i\theta_2}}{2i\sin (\theta_1-\theta_2)}c_0.
$$
Next, \eqref{eq:Robin2} implies
$$
\left\{\begin{matrix}
c_2e^{2i\theta_1}&-&c_{-2}e^{-2i\theta_1}&=&\gamma_1(c_1e^{i\theta_1}+c_{-1}e^{-i\theta_1})\\
c_2e^{2i\theta_2}&-&c_{-2}e^{-2i\theta_2}&=&\gamma_2(c_1e^{i\theta_2}+c_{-1}e^{-i\theta_2})
\end{matrix}\right.,
$$
that is
$$
\left\{\begin{matrix}
c_2e^{2i\theta_1}&-&c_{-2}e^{-2i\theta_1}&=&\frac{\gamma_1}{i\sin (\theta_1-\theta_2)}
\bigl(\gamma_1\cos(\theta_1-\theta_2)-\gamma_2\bigr)c_0 , \\
c_2e^{2i\theta_2}&-&c_{-2}e^{-2i\theta_2}&=&
\frac{\gamma_2}{i\sin
  (\theta_1-\theta_2)}\bigl(\gamma_1-\gamma_2\cos(\theta_1-\theta_2)\bigr)c_0 .
\end{matrix}\right.
$$

This may be written in the form
$$
\left\{\begin{matrix}
c_2e^{2i\theta_1}&-&c_{-2}e^{-2i\theta_1}&=&\frac{\mu_2}{\sin (\theta_1-\theta_2)}c_0\\
c_2e^{2i\theta_2}&-&c_{-2}e^{-2i\theta_2}&=&
\frac{\nu_2}{\sin (\theta_1-\theta_2)}c_0
\end{matrix}\right.,
$$
with $|\mu_2|,|\nu_2|\leq (|\gamma_1|+|\gamma_2|)^2$.
It follows that
$$
c_2=\frac{\mu_2e^{-2i\theta_2}-\nu_2e^{-2i\theta_1}}{2i\sin(\theta_1-\theta_2)\sin 2(\theta_1-\theta_2)}c_0
\quad,\quad
c_{-2}=-\frac{\nu_2e^{2i\theta_1}-\mu_2e^{2i\theta_2}}{2i\sin (\theta_1-\theta_2)\sin 2(\theta_1-\theta_2)}c_0.
$$

One can then write \eqref{eq:Robin3} in the form
\begin{eqnarray*}
c_{m+1}e^{i(m+1)\theta_j}- c_{-m-1}e^{-i(m+1)\theta_j}&=&
\gamma_j( c_me^{im\theta_j}+c_{-m} e^{-im\theta_j})\\
&&-c_{m-1}e^{i(m-1)\theta_j}+c_{-m+1}e^{-i(m-1)\theta_j},
\end{eqnarray*}
from which we get that $c_{\pm(m+1)}\sin (m+1)(\theta_1-\theta_2)$
is a linear combination of $c_{\pm m}$ and $c_{\pm(m-1)}$ with coefficients bounded by $\max(1,\gamma_1,\gamma_2)$
so that
$$
c_{\pm(m+1)}=\frac{\kappa_{\pm(m+1)}}{\prod_{k=1}^{m+1}\sin k(\theta_1-\theta_2)}c_0,
$$
with $|\kappa_{\pm(m+1)}|\leq (2+|\gamma_1|+|\gamma_2|)^{m+1}$.

Further, the Bessel's function satisfies the bound
\begin{equation}
\label{eq:bessel}
|J_{m}(r)|\leq \frac{r^{m}}{2^{m}m!}.
\end{equation}
Thus, if $\frac{\theta_1-\theta_2}{\pi}$ is badly approximable by rationals,
$$
\left(\prod_{k=1}^{m+1}\sin k(\theta_1-\theta_2)\right)^{-1}=O\bigl((m+1)!\bigr)
$$
so that
\begin{equation}
\label{eq:besselexp}
u(r,\theta)=\sum_m c_m J_{|m|}(r)e^{im\theta}
\end{equation}
converges to a solution $u$ of the Helmholtz equation.

\end{proof}

We suspect that in some cases the series \eqref{eq:besselexp} diverges so that the solution space is still of dimension 0.
For instance, if $\alpha_1=0$, $\beta_1=1$, $\theta_1=0$, that is if $u$ satisfies a Neumann equation on the $x$-axis,
then the Schwarz reflection principle shows that $c_{-m}=c_m$. Then, the Robin condition on $\ell_{\theta_2}$ implies
$$
\gamma_2c_m\cos m\theta_2=c_{m-1}\sin(m-1)\theta_2+c_{m+1}\sin(m+1)\theta_2.
$$
We think that for some numbers $\theta_2$ (in particular when $\theta_2/\pi$ is a Liouville number), this would lead to
a series \eqref{eq:besselexp} that is divergent.

\subsection{Stability}

A natural extension of Remark \ref{rem3} would be to replace
the Robin condition by an approximate Robin condition and ask whether
the corresponding solution is small. In general this is not true. 
We  illustrate the instability in the simplest case, namely for the Laplace equation with
Dirichlet conditions on lines. 

\begin{proposition} \label{prop:37}
Let $\theta_1,\theta_2\in\R$, be such that $\theta_1-\theta_2\notin
\pi\mathbb{Q}$, and let $\eps>0$.

 There exist infinitely many $n\in\N$
such that the solution of the Laplace  equation
\[
\left\{\begin{array}{ll}\Delta u(x,y)=0,& (x,y)\in D(0,1),\\
u(r,\theta_1)=\eps|r|^n,&|r|<1,\\
u(r,\theta_2)=2\eps|r|^n,&|r|<1,\end{array}\right.
\]
satisfies  $\norm{u}_{L^\infty (D(0,1))}\geq\frac{n\eps}{8}$.
\end{proposition}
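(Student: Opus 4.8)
The plan is to exhibit the (unique) solution explicitly as a homogeneous harmonic function and then to estimate its sup-norm by a Diophantine argument. Write $\phi=\theta_1-\theta_2$, so that $\phi\notin\pi\Q$ by hypothesis. First I would restrict attention to \emph{even} values of $n$: for such $n$ the boundary datum $\eps\abs{r}^n=\eps r^n$ is the restriction of a polynomial, and I look for a solution of the form
\[
u(r,\theta)=r^n\bigl(A\cos(n\theta)+B\sin(n\theta)\bigr),
\]
which is harmonic and homogeneous of degree $n$. The point of taking $n$ even is that then $\cos\bigl(n(\theta_j+\pi)\bigr)=\cos(n\theta_j)$ and $\sin\bigl(n(\theta_j+\pi)\bigr)=\sin(n\theta_j)$, so that the restriction of $u$ to the whole line $\ell_{\theta_j}$, parametrized by the signed coordinate $r\in(-1,1)$, equals $\abs{r}^n\bigl(A\cos(n\theta_j)+B\sin(n\theta_j)\bigr)$, which is exactly of the form $(\text{const})\,\abs{r}^n$ matching the even datum. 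Since the two data agree at the origin with $u(0,0)=0$, the non-crossing lemma guarantees that a solution is unique, so it suffices to produce one of this form.

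The two boundary conditions then reduce to the linear system
\[
\begin{cases} A\cos(n\theta_1)+B\sin(n\theta_1)=\eps, \\ A\cos(n\theta_2)+B\sin(n\theta_2)=2\eps, \end{cases}
\]
whose determinant is $\sin(n\theta_2-n\theta_1)=-\sin(n\phi)$. Since $\phi\notin\pi\Q$ we have $\sin(n\phi)\neq0$ for every $n\geq1$, so the system is solvable. Solving by Cramer's rule and simplifying, a short computation yields
\[
A^2+B^2=\frac{\eps^2\bigl(5-4\cos(n\phi)\bigr)}{\sin^2(n\phi)}.
\]
Writing $u(r,\theta)=r^n\sqrt{A^2+B^2}\,\cos(n\theta-\delta)$ for a suitable phase $\delta$, its norm on the open disk is $\norm{u}_{L^\infty(D(0,1))}=\sqrt{A^2+B^2}$. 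Using $5-4\cos(n\phi)\geq1$, I obtain the clean lower bound
\[
\norm{u}_{L^\infty(D(0,1))}=\frac{\eps\sqrt{5-4\cos(n\phi)}}{\abs{\sin(n\phi)}}\geq\frac{\eps}{\abs{\sin(n\phi)}}.
\]

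It remains to produce infinitely many even $n$ for which $\abs{\sin(n\phi)}$ is of order $1/n$. Here I would apply Dirichlet's approximation theorem to the irrational number $2\phi/\pi$: there are infinitely many integers $m$ and $p$ with $\abs{2m\phi/\pi-p}<1/m$, that is $\abs{n\phi-p\pi}<2\pi/n$ for the even integer $n=2m$. For these $n$,
\[
\abs{\sin(n\phi)}=\abs{\sin(n\phi-p\pi)}\leq\abs{n\phi-p\pi}<\frac{2\pi}{n},
\]
whence $\norm{u}_{L^\infty(D(0,1))}\geq\eps/\abs{\sin(n\phi)}>n\eps/(2\pi)\geq n\eps/8$, since $2\pi<8$. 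The main obstacle is the interplay between the two constraints on $n$: one needs $n$ even for the even datum $\eps\abs{r}^n$ to be realizable by a real-analytic harmonic function, and simultaneously $\abs{\sin(n\phi)}$ as small as $1/n$. Invoking Dirichlet's theorem for $2\phi/\pi$ rather than for $\phi/\pi$ resolves both requirements at once, at the harmless cost of replacing the constant $\pi$ by $2\pi$, which is still comfortably below $8$.
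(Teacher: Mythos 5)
Your proof is correct, and its skeleton is the same as the paper's: write the solution as a homogeneous harmonic of degree $n$, solve a $2\times 2$ linear system with determinant $\pm\sin\bigl(n(\theta_1-\theta_2)\bigr)$, and use Dirichlet's approximation theorem to make $\abs{\sin(n\phi)}\lesssim 1/n$ along infinitely many $n$. Two of your refinements are worth noting. First, the paper works with complex coefficients $c_{\pm n}$ and gets its lower bound by evaluating $u$ at a cleverly chosen angle $n(\theta-\theta_2)=\pi/2$, which requires a small error estimate on $\cos\bigl(n(\theta_1-\theta_2)\bigr)$ near $(-1)^p$ and a cutoff $n>6$; you instead compute the exact amplitude $\sqrt{A^2+B^2}=\eps\sqrt{5-4\cos(n\phi)}/\abs{\sin(n\phi)}$ and use the trivial bound $5-4\cos(n\phi)\geq 1$, which is cleaner and gives the sup-norm exactly. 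Second, and more substantively, you noticed the parity constraint: since the datum is $\eps\abs{r}^n$ on the \emph{full} line ($\abs{r}<1$, including $r<0$, where $u(r,\theta_j)=u_p(\abs{r},\theta_j+\pi)$), matching the degree-$n$ terms on both half-lines forces $(-1)^n\eps=\eps$, so the boundary value problem is solvable only for even $n$. The paper's proof silently matches only the positive half-lines and applies Dirichlet's theorem without parity control (continued-fraction denominators need not be even), so your device of applying Dirichlet's theorem to $2\phi/\pi$ and taking $n=2m$ — at the harmless cost of $2\pi/n$ in place of $\pi/n$, still below the threshold $8$ — genuinely repairs a gap that the paper glosses over. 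Your appeal to the non-crossing lemma for uniqueness of the solution is also a legitimate and careful addition; the paper obtains uniqueness implicitly from the coefficient computation.
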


\begin{proof}
Write $u$ in polar coordinates as 
$$
u(r,\theta)=\sum_{m\in\Z}c_mr^{|m|}e^{im\theta} = c_0 + \sum
_{m=1}^\infty (c_m e^{im\theta } + c_{-m} e^{-im\theta } ) r^m .
$$
Then  each Dirichlet condition is equivalent to the set of equations
$$
c_me^{im\theta_j}+c_{-m}e^{-im\theta_j}=u_{m,j},
$$
where $u_{n,1}=\eps$, $u_{n,2}=2\eps$ and $u_{m,j}=0$ if $m\not=n$.

For any $m $ the resulting  system of equations has  determinant $2i\sin
m(\theta_1-\theta_2)\not=0$. If   $m\not=n$, then  $c_m=c_{-m}=0$. 
For $m=n$ we obtain
$$
c_n=\frac{1-2e^{-in(\theta_1-\theta_2)}}{2i\sin n(\theta_1-\theta_2)}e^{-in\theta_2}\eps
\quad,\quad
c_{-n}=-\frac{1-2e^{in(\theta_1-\theta_2)}}{2i\sin n(\theta_1-\theta_2)}e^{in\theta_2}\eps.
$$
It follows that
$$
u(r,\theta)=\frac{\eps }{\sin n(\theta_1-\theta_2)}
\Im\big((1-2e^{-in(\theta_1-\theta_2)})e^{in(\theta-\theta_2)}\big) \,
r^n.
$$

But now, according to Dirichlet's theorem, there exist infinitely many
$n,p\in\Z\setminus\{0\}$ such that
$$
\abs{\frac{\theta_1-\theta_2}{\pi}n-p}\leq
\frac{1}{n} , 
$$
and thus $|\cos  n(\theta _1 - \theta _2) - \cos p\pi | \leq \pi
/n$ and $|\sin  n(\theta _1 - \theta _2) - \sin p\pi | \leq \pi
/n$.   

Now choose $\theta $ such that $n(\theta - \theta _2) = \pi / 2$ and
$e^{in(\theta - \theta _2)} = i$. Then for $n>6$
\begin{align*}
|\Im\big((1-2e^{-in(\theta_1-\theta_2)})e^{in(\theta-\theta_2)}\big)
&= |\mathrm{Re}\, ( 1-2e^{-in(\theta_1-\theta_2)})  | \\
& = |1 - 2 \cos
n(\theta _1 - \theta _2)| \geq |1 - 2(-1)^p| -
\frac{\pi}{n} \geq \frac{1}{2}\, .
\end{align*}


It follows that
$|u(r,\theta)|\geq \frac{n}{2\pi} \eps r^n$, from which the result follows immediately.
\end{proof}

Proposition~\ref{prop:37} implies that the reconstruction of a
harmonic function from its restriction to two lines is always
unstable. In particular, if $u_1$ and $u_2$ are harmonic functions
such that $\| (u_1-u_2)|_{\Gamma _j} \|_\infty < \epsilon $, we cannot
make any assertion on the error $\|u_1-u_2\|_{L^\infty (D(0,1))}$. 

Note that Dirichlet's theorem allows to simultaneously approximate
arbitrarily many irrational numbers. Consider the Laplace equation 
$$
\left\{\begin{matrix}\Delta u(x,y)=0,& (x,y)\in D(0,1),\\
u(r,\theta_j)=\alpha_j\eps|r|^n,&r\in\R,j=1,\ldots,N.\\
\end{matrix}\right. 
$$
For this system to have a solution,  the $\alpha_j$'s must satisfy some 
compatibility condition. Consequently, this system either has no
solution, or,  if the $\alpha _j$'s are compatible,  the norm of the
solution  can  be arbitrarily 
large.

\section{Results in higher dimension }

In this section we study possible extensions of the uniqueness
property to higher dimensions. The general problem is the
following: For $1\leq m<d$, 
let $\Gamma_1,\ldots,\Gamma_N$ be  $m$-dimensional analytic
submanifolds (or just linear subspaces) in $\R ^d$ 
intersecting at $0$.  Under which conditions is
$\Gamma _1 \cup \dots \cup \Gamma _N$ a set of uniqueness for
solutions of $\Delta  u + k^2 u = 0$? In other words, when is it true
that $u|_{\Gamma _1 \cup \dots \cup \Gamma _N} =0$ implies that $u =
0$. 

We will treat the case of two hypersurfaces ($m=d-1$) and give a
counter-example for an arbitrary finite collection of lines. 

\subsection{An extension of the results of Section \ref{Sec:3}}

We will now extend some of the results of Section \ref{sec:3} to higher
dimensions.

\begin{theorem}
Let $d\geq 3$ and let $\Omega$ be a domain in $\R^d$.
Let $\mm_1,\mm_2$ be two $d-1$ dimensional submanifolds of $\R^d$ that intersect at $0$ and let
$\theta_1,\theta_2$ be the unit normal vectors  to $\mm _1 $ and $\mm
_2$  at $0$. Assume that $\arccos\scal{\theta_1,\theta_2}\notin\pi\Q$.

If  $u$ is  a solution of $\Delta u+k^2u=0$ on $\Omega$ such that
$u=0$ on $\mm_1\cup\mm_2$, then $u=0$
on $\Omega$.
\end{theorem}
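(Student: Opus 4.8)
The plan is to reduce the $d$-dimensional problem to the planar non-crossing lemma of Section~\ref{Sec:3} by fixing a judicious two-dimensional slice and expanding the solution in spherical harmonics. Since $u$ is real-analytic (being a solution of an elliptic equation with analytic coefficients), it suffices to show that $u$ vanishes in a neighborhood of $0$. I will work with the local expansion \eqref{eq:helm1} in spherical harmonics, writing
$$
u(r\theta) = \sum_{m=0}^\infty \sum_{j=1}^{N(m)} a_{m,j}\, \ffi_m(r)\, Y_m^j(\theta), \qquad \theta\in\S^{d-1},
$$
where $\ffi_m(r) \sim r^m$ as $r\to 0$ (for the Helmholtz case $\ffi_m(r) = (kr)^{-(d-2)/2}J_{\nu(m)}(kr)$ up to normalization, and $\ffi_m(r)=r^m$ in the Laplace case $k=0$). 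The strategy is to choose the plane $P = \vect(\theta_1,\theta_2)$ spanned by the two normals, restrict $u$ to $P$, and recognize the restriction as a planar expansion of exactly the form required by the non-crossing lemma.

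The key observation is that $\mm_1 \cap P$ is a $\cc^1$-curve through $0$ whose tangent at $0$ is the line in $P$ orthogonal to $\theta_1$, and similarly for $\mm_2 \cap P$; the angle between these two tangent lines equals $\arccos\scal{\theta_1,\theta_2}$, which by hypothesis is not in $\pi\Q$. The hypothesis $u=0$ on $\mm_1\cup\mm_2$ thus forces the restriction $u|_P$ to vanish on two $\cc^1$-curves in $P$ meeting at an irrational angle. First I would fix an orthonormal frame so that $P$ is a coordinate plane, and parametrize $P$ by polar coordinates $(r,\varphi)$. Restricting a single term $Y_m^j(\theta)$ to the great circle $\S^{d-1}\cap P$ yields a function of the angular variable $\varphi$ alone; expanding each such restriction into Fourier modes $e^{ip\varphi}$ produces, after regrouping, an expansion of $u|_P$ of the shape
$$
u|_P(r,\varphi) = \sum_{p\in\Z} c_p\, \psi_{|p|}(r)\, e^{ip\varphi},
$$
with $\psi_{|p|}(r) \sim r^{|p|}$ as $r\to 0$. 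The non-crossing lemma of Section~\ref{Sec:3} then applies verbatim and forces $u|_P \equiv 0$ near $0$.

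The main obstacle is that $u|_P\equiv 0$ on the plane $P$ is far weaker than $u\equiv 0$ in a neighborhood of $0$ in $\R^d$, so the heart of the proof lies in propagating the vanishing off the plane. This is exactly the point where one must exploit explicit formulas for spherical harmonics, as announced in the discussion preceding Theorem~B. The plan here is to argue coefficient by coefficient in the radial expansion: having established $u|_P=0$, I would show that for each degree $m$ the restriction of the degree-$m$ part $\sum_j a_{m,j} Y_m^j$ to the great circle $\S^{d-1}\cap P$ vanishes identically, and then choose the basis $\{Y_m^j\}$ adapted to the splitting $\R^d = P \oplus P^\perp$ (for instance Gegenbauer/associated-Legendre products in which the restriction to $\S^{d-1}\cap P$ isolates the coefficients) so that vanishing on the great circle does \emph{not} immediately kill all $a_{m,j}$. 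To recover the remaining coefficients I would use the full strength of the two hypersurface conditions rather than just their trace on $P$: since $\theta_1,\theta_2$ are the normals, choosing the plane through $0$ spanned by $\theta_i$ and \emph{any} third direction gives further irrational-angle vanishing data, and iterating over a family of such planes covering a neighborhood of the great circle recovers $u=0$ on all of $\S^{d-1}$ at each radius, hence $u\equiv 0$ near $0$. Making this covering argument precise, and checking that the adapted spherical-harmonic basis behaves as claimed under restriction, is the technical crux; everything else reduces to the already-proven planar non-crossing lemma and the real-analyticity continuation.
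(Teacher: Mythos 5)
Your first step already fails, and in a way the paper itself documents. The restriction $u|_P$ does not have the form required by the non-crossing lemma: when you restrict a degree-$m$ term $a_{m,j}\,\ffi_m(r)Y_m^j(\theta)$ to the plane $P$, the trace of $Y_m^j$ on the great circle $\S^{d-1}\cap P$ is a trigonometric polynomial containing all modes $e^{ip\varphi}$ with $|p|\le m$ and $p\equiv m \pmod 2$, so the coefficient of $e^{ip\varphi}$ in $u|_P$ is a series $\sum_{m\ge |p|} b_{p,m}\,\ffi_m(r)$ whose leading order can be $r^m$ for \emph{any} such $m\ge|p|$, not $r^{|p|}$. A one-line example in $\R^3$ with $P=\{x_3=0\}$: the harmonic polynomial $u=2x_3^2-x_1^2-x_2^2$ restricts to $u|_P=-r^2$, a mode $p=0$ of radial order $r^2$. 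Thus the rigid link between angular frequency and radial vanishing order, which is exactly what drives the induction in the non-crossing lemma, is destroyed by slicing, and the lemma does not apply, verbatim or otherwise. Worse, the intermediate conclusion $u|_P\equiv 0$ is false in general, by the mechanism of Proposition~\ref{prop:line1}: for $m$ large choose a nonzero $Y\in\mathcal{H}_m^d$ with $Y(\pm v_1)=Y(\pm v_2)=0$, where $v_1,v_2\in\S^{d-1}\cap P$ make an angle not in $\pi\Q$; since these four point evaluations cut down at most four dimensions, while vanishing identically on the great circle is a condition of codimension $m+1$, one may take $Y$ not identically zero on $\S^{d-1}\cap P$. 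Then $u(r\theta)=r^mY(\theta)$ is harmonic, vanishes on two $\cc^1$-curves in $P$ (the lines $\R v_1,\R v_2$) crossing at $0$ at an irrational angle, and yet $u|_P\not\equiv 0$. In short, after slicing you retain only one-dimensional (curve) data, and in $d\ge 3$ curve data provably cannot force vanishing; the hypothesis of the theorem is genuinely $(d-1)$-dimensional and must be used as such.

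The propagation half of your plan inherits this defect and adds one of its own: for a plane $P'=\vect(\theta_1,w)$ with $w$ an arbitrary third direction, $\theta_2\notin P'$ in general, so the tangents at $0$ of $\mm_j\cap P'$ are the lines $\theta_j^\perp\cap P'$, and the angle between them is the angle between the projections of the normals onto $P'$; it varies continuously with $w$, does not equal $\arccos\scal{\theta_1,\theta_2}$, and is a rational multiple of $\pi$ for a dense set of choices, so the promised ``further irrational-angle vanishing data'' is simply not available. Compare with the paper's actual proof, which never slices: after a rotation placing both normals in the coordinate $2$-plane of the last spherical angle $\ffi$, it writes $\mm_j$ locally as graphs $\ffi=\psi_j(r,\theta)$ over \emph{all} remaining angles $\theta\in[0,\pi)^{d-2}$, with $\psi_j(0,\theta)=\ffi_j$ constant in $\theta$ and $\ffi_1-\ffi_2\notin\pi\Q$. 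Expanding $u$ in the explicit Gegenbauer basis $\tilde Y_{\beta,m}(\theta)e^{im\ffi}$ and using the vanishing for all $\theta$ together with the linear independence of $\{\tilde Y_{\beta,m}\,:\,|\beta|+m=n\}$, each order $r^n$ decouples into $2\times 2$ systems in the single variable $\ffi$ with determinants $2i\sin m(\ffi_2-\ffi_1)\not=0$. It is precisely this $\theta$-freedom, i.e., the full hypersurface data, that supplies enough equations to isolate every coefficient $c_{\beta,\pm m}$; your restriction to one plane yields only two scalar equations per order $r^n$ against the $2n+1$ collapsed unknowns, which is why no variant of the planar argument can close the gap.
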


\begin{proof}
We first describe a particular basis of spherical harmonics 
and fix some notation taken from {\it e.g.} \cite{DX}.


First, we introduce the Gegenbauer polynomials
$$
C_n^\lambda(x)=\sum_{k=0}^{\lfloor n/2\rfloor}
(-1)^k\frac{\Gamma(n-k+\lambda )}{\Gamma(\lambda)k!(n-2k)!}(2x)^{n-2k}.
$$
They satisfy the orthogonality relation $\dst\int_0^\pi C_m^\lambda(\cos\theta)C_n^\lambda(\cos\theta)\sin(\theta)^{2\lambda}\d\theta=0$
if $m\not=n$.

We use spherical coordinates in $\R^d$,
$$
\left\{
\begin{array}{rcl}
x_1&=&r\sin\theta_{d-2}\cdots\sin\theta_1\cos\ffi\\
x_2&=&r\sin\theta_{d-2}\cdots\sin\theta_1\sin\ffi\\
x_3&=&r\sin\theta_{d-2}\cdots\sin\theta_2\cos\theta_1\\
\vdots&&\vdots\\
x_{d-1}&=&r\sin\theta_{d-2}\cos\theta_{d-3} \\
x_d&=&r\cos\theta_{d-2}
\end{array}\right.
$$
with $r\geq 0$, $\theta:=(\theta_1,\ldots,\theta_{d-2})\in [0,\pi)^{d-2}$, $\ffi\in[0,2\pi)$.

After a suitable translation and rotation, we may assume that, in a
neighborhood $V$ of $0$, both  $\mm_1\cap V$ and $\mm_2\cap V$
are parametrized as
$$
\mm_j\cap V=\bigl\{\bigl(r,\theta,\psi_j(r,\theta)\bigr):  0<r<\eps,\theta\in [0,\pi)^{d-2}\bigr\}
$$
where the $\psi_j$'s are smooth functions. Moreover, if we define
$$
\ffi_j=\psi_j(0,\theta)= \lim _{r\to 0} \psi _j (r,\theta )
$$
(which does not depend on $r$),
then our hypothesis on $\mm_1,\mm_2$ implies that $\ffi_1-\ffi_2\notin\pi\Q$.

Next, we  define a basis of spherical harmonics.
For $\alpha\in\nn:=\N_0^{d-2}\times\Z$ we set 
\begin{eqnarray*}
Y_\alpha(r,\theta_1,\ldots,\theta_{d-2},\ffi)&=&r^{|\alpha|}e^{i\alpha_{d-1}\ffi}
\prod_{j=1}^{d-2}(\sin\theta_{d-j})^{|\alpha|^{j+1}}C^{\lambda_j}_{\alpha_j}(\cos\theta_j)\\
&:= &r^{|\alpha|}e^{i\alpha_{d-1}\ffi}\tilde Y_{\alpha}(\theta_1,\ldots,\theta_{d-2})
\end{eqnarray*}
where $|\alpha|^j=\alpha_j+\cdots+\alpha_{d-1}$ and $\lambda_j=|\alpha|^{j+1}+(d-j-1)/2$.
Then $\{Y_\alpha :  \alpha\in\nn\}$ forms a basis of spherical harmonics on $\R^d$.
Further, for each $n\geq1$, the set
$$
\{\tilde Y_{\beta,m}\,: (\beta,m)\in\N_0^{d-1}, |\beta|+m=n\}
$$
is linearly independent.

In a neighborhood of $0$,
an arbitrary  solution $u$ of the Laplace equation $\Delta u=0$ can
then be written in spherical coordinates in the form 
\begin{multline}
\label{eq:lapdimd}
u(r,\theta_1,\ldots,\theta_{d-2},\ffi)=\sum_{\alpha\in\nn}c_{\alpha}Y_\alpha(r,\theta_1,\ldots,\theta_{d-2},\ffi)\\
=\sum_{n=0}^\infty r^n \sum_{m=-n}^n\bigg(\sum_{\substack{ \beta\in\N_0^{d-2}\\ |\beta|+|m|=n}}
c_{\beta,m}\tilde Y_{\beta,m}(\theta_1,\ldots,\theta_{d-2})\bigg)e^{im\ffi}
\end{multline}
The local solution of the Helmholtz equation $\Delta u+u=0$ can be written in the form~\cite{MF}
\begin{multline}
\label{eq:helmdimd}
u(r,\theta_1,\ldots,\theta_{d-2},\ffi)\\
=r^{-\frac{d-2}{2}}\sum_{n=0}^\infty J_{n+\frac{d-2}{2}}(r) \sum_{m=-n}^n\bigg(\sum_{\substack{ \beta\in\N_0^{d-2}\\ |\beta|+|m|=n}}
c_{\beta,m}\tilde Y_{\beta,m}(\theta_1,\ldots,\theta_{d-2})\bigg)e^{im\ffi}. 
\end{multline}
Now let $u$ be a solution of the Laplace equation, thus given by  \eqref{eq:lapdimd}.

First $c_{0}=u\bigl(0,\theta,\ffi_j\bigr)=0$. But then
$$
u\bigl(r,\theta,\psi _j(r,\theta)\bigr)=r
\sum_{m=-1}^1\bigg(\sum_{\substack{\beta\in\N_0^{d-2}\\ |\beta|+|m|=1}}
c_{\beta,m}\tilde Y_{\beta,m}(\theta)\bigg)e^{im\psi _j(r,\theta)}+o(r).
$$
As $e^{im\psi _j(r,\theta)}=e^{im\ffi_j}+o(1)$, we get
$$
\sum_{m=-1}^1\bigg(\sum_{\begin{matrix}\beta\in\N_0^{d-2}\\ |\beta|+|m|=1\end{matrix}}
c_{\beta,m}\tilde Y_{\beta,m}(\theta)\bigg)e^{im\ffi_j}=0.
$$
Thus for all $\theta \in [0,\pi )^{d-2}$
$$
(c_{0,1}e^{im\ffi_j}+c_{0,-1}e^{im\ffi_j})\tilde Y_{0,1}(\theta)
+\sum_{\begin{matrix}\beta\in\N_0^{d-2}\\ |\beta|=1\end{matrix}}
c_{\beta,0}\tilde Y_{\beta,0}(\theta)=0.
$$
The linear independence of the $\tilde Y_{\beta,m}$'s implies  that $c_{\beta,0}=0$ when $|\beta|=1$ and
$$
\left\{\begin{matrix}
c_{0,-1}e^{-i\ffi_1}&+&c_{0,1}e^{i\ffi_1}&=&0 , \\
c_{0,-1}e^{-i\ffi_2}&+&c_{0,1}e^{i\ffi_2}&=&0 .
\end{matrix}\right.
$$
As the determinant of this system is $2i\sin (\ffi_2-\ffi_1)\not=0$ we get $c_{0,-1}=c_{0,1}=0$.

Assume now that $c_{\beta,m}=0$ for every $\beta,m$ with $|\beta|+|m|\leq n_0-1$ then, as previously
$$
u\bigl(r,\theta,\psi _j(r,\theta)\bigr)=r^{n_0}
\sum_{m=-n_0}^{n_0}\bigg(\sum_{\begin{matrix}\beta\in\N_0^{d-2}\\ |\beta|+|m|=n_0\end{matrix}}
c_{\beta,m}\tilde Y_{\beta,m}(\theta)\bigg)e^{im\ffi_j}+o(r^{n_0}).
$$
It follows again that
\begin{multline}
\bigg(\sum_{\begin{matrix}\beta\in\N_0^{d-2}\\ |\beta|=n_0\end{matrix}}
c_{\beta,0}\tilde Y_{\beta,0}(\theta)\bigg)\\
+\sum_{m=1}^{n_0}\bigg(\sum_{\begin{matrix}\beta\in\N_0^{d-2}\\ |\beta|+m=n_0\end{matrix}}
(c_{\beta,m}e^{im\ffi_j}+c_{\beta,-m}e^{-im\ffi_j})\tilde Y_{\beta,m}(\theta)\bigg)=0.
\end{multline}
This implies that $c_{\beta,0}=0$ if $|\beta|=n_0$ and that for each $m=1,\ldots,n_0$
and each $\beta$ with $|\beta|=n_0-m$,
$$
\left\{\begin{matrix}
c_{\beta,-m}e^{-im\ffi_1}&+&c_{\beta,m}e^{im\ffi_1}&=&0 , \\
c_{\beta,-m}e^{-im\ffi_2}&+&c_{\beta,m}e^{im\ffi_2}&=&0 .
\end{matrix}\right.
$$
The determinant of this system is $2i\sin m(\ffi_2-\ffi_1)\not=0$,  thus $c_{\beta,-m}=c_{\beta,m}=0$.
We have thus proved the result.

The case of $u$ of the form \eqref{eq:helmdimd} is similar.

\end{proof}

\subsection{Negative results}
Finally we show that there is no unique continuation of the Helmholtz
equation for a finite set of intersecting lines. 

\begin{proposition}
\label{prop:line1}
Let $d\geq 3$ and $k\in\R$. For every
$\theta_1,\ldots,\theta_N\in\S^{d-1}$ there exists a non-zero solution of $\Delta u+k^2u=0$
such that $u=0$ on $\R\theta_1\cup\cdots\cup\R\theta_N$. 
\end{proposition}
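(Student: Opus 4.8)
The plan is to exploit the abundance of spherical harmonics in dimension $d\geq 3$: I will produce a single spherical harmonic of sufficiently high degree that vanishes at each of the directions $\theta_1,\dots,\theta_N$, and then attach to it the appropriate radial profile to obtain a genuine solution of $\Delta u+k^2u=0$ vanishing on every line $\R\theta_j$. The whole argument is a dimension count, and the point is that the counting succeeds exactly because $d\geq 3$.

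First I would record the reduction together with the relevant dimension bound. Recall that the space $\hh_m$ of spherical harmonics of degree $m$ on $\S^{d-1}$ has dimension
$$
N(m)=\binom{m+d-1}{d-1}-\binom{m+d-3}{d-1},
$$
and that for $d\geq 3$ one has $N(m)\to\infty$ as $m\to\infty$ (for instance $N(m)=2m+1$ when $d=3$), whereas in dimension $2$ this dimension stays bounded --- this is precisely where the hypothesis $d\geq 3$ enters. Now fix $m\geq 1$ and let $Y\in\hh_m$, extended to the homogeneous harmonic polynomial $P(x)=|x|^m Y(x/|x|)$. Homogeneity gives $P(t\theta_j)=t^m P(\theta_j)$ for all $t\in\R$, and the parity relation $Y(-\theta_j)=(-1)^m Y(\theta_j)$ shows that the scalar equation $Y(\theta_j)=0$ already forces the vanishing of the solutions constructed below along the \emph{whole} line $\R\theta_j$, in both directions. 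Thus it suffices to find a non-zero $Y$ with $Y(\theta_j)=0$ for all $j$. Since the evaluation map
$$
\hh_m\ni Y\longmapsto \bigl(Y(\theta_1),\dots,Y(\theta_N)\bigr)\in\C^N
$$
is linear, choosing $m$ large enough that $N(m)>N$ makes its kernel non-trivial, and we obtain the desired non-zero $Y$ (which may be taken real).

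Finally I would build the solution from such a $Y$. When $k=0$, take $u(x)=P(x)=r^m Y(\theta)$: this is a non-zero harmonic polynomial, and by the displayed homogeneity it vanishes on $\R\theta_1\cup\cdots\cup\R\theta_N$. When $k\neq 0$, after the rescaling $x\mapsto kx$ we may assume $k=1$, and separation of variables yields the single-term solution
$$
u(r\theta)=r^{-(d-2)/2}J_{m+(d-2)/2}(r)\,Y(\theta)
$$
of $\Delta u+u=0$, which is one term of the expansion \eqref{eq:helmdimd}. Since $r^{-(d-2)/2}J_{m+(d-2)/2}(r)\sim c\,r^m$ near $0$, this $u$ is smooth, and as its radial factor vanishes only at isolated points, the condition $u=0$ on $\R\theta_j$ is again equivalent to $Y(\theta_j)=0$ (the origin being covered because $m\geq 1$). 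In either case $u\not\equiv 0$, which proves the proposition.

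There is essentially no serious obstacle here: the statement is a soft counting result, and the only thing that really has to be gotten right is the asymptotic growth $N(m)\to\infty$. This growth fails precisely in dimension $2$, which is why the result is stated for $d\geq 3$ and why it stands in sharp contrast to the positive uniqueness results of Section~\ref{Sec:3}, where two lines suffice. The one technical point worth a line of care is the passage from the pointwise condition $Y(\theta_j)=0$ to vanishing on the full line, which rests on homogeneity in the Laplace case and on the parity of spherical harmonics in the Helmholtz case.
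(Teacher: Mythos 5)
Your proof is correct and follows essentially the same route as the paper: a dimension count in the space of degree-$m$ spherical harmonics (whose dimension grows without bound precisely when $d\geq 3$) produces a non-zero $Y$ vanishing at the prescribed directions, to which one attaches the radial factor $r^m$ for $k=0$ or $r^{-(d-2)/2}J_{m+(d-2)/2}(kr)$ for $k\neq 0$. The only (harmless) deviation is that you impose just the $N$ conditions $Y(\theta_j)=0$ and invoke the parity $Y(-\theta)=(-1)^m Y(\theta)$ to cover both rays of each line, whereas the paper imposes the $2N$ evaluation conditions $Y(\pm\theta_j)=0$ directly; both counts succeed for $m$ large enough.
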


\begin{proof}
It suffices  to find a non-zero spherical harmonic $Y$ of suitable
degree $m$ such that $Y(\pm \theta_j)=0$, $j=1,\ldots,N$. Then the
functions $u(r\theta)=r^mY(\theta)$ (for  $k=0$) and
$u(r\theta)=r^{-(d-2)/2}J_{m+(d-2)/2}(kr)Y(\theta)$ (for  $k\not=0$) vanish on the
lines $\R \theta _j, j=1, \dots , N$, and thus the solution of the
Helmholtz-Laplace  equation is not uniquely determined by its
restriction to a finite number of lines. 

Let $\mathcal{H}_m^d$ denote the subspace of spherical harmonics of
degree $m$ in $\R ^d$. Its dimension is $ \binom{m+d-1}{m}  \geq 2m+1$ for $d\geq 3$. 

Now consider the linear forms $L_j:\mathcal{H}_m^d\to\C$ given by
$L_j(Y)= Y(\theta_j)$ for $j=1, \dots , N$ and $L_j(Y) = Y(-\theta
_j)$ for $j=N+1, \dots , 2N$. A dimension count yields $\bigcap _{j}
\mathrm{ker}\, L_j \geq \mathrm{dim} \, \mathcal{H}^d_m - 2N \geq 1$
for $m$ large enough. So $ \bigcap _{j}
\mathrm{ker}\, L_j  \neq \{0\}$ and there exists a $Y\in
\mathcal{H}_m^d$ satisfying $Y(\theta _j) = 0$ for $j= 1 , \dots ,
N$. 



Thus the solution of the Helmholtz equation is not uniquely
determined by its restriction to any finite set of intersecting
lines. 
\end{proof}

We do not know what happens for restrictions  of the Helmholtz
equation  to 
$k$-dimensional subspaces when $2\leq k\leq d-2$ and 
$d\geq 4$.

Proposition~\ref{prop:line1} can be turned into a statement about
Heisenberg uniqueness pairs.

\begin{corollary}
The sphere $\S ^{d-1}$ for $d\geq 3$ and an arbitrary finite set $\R
\theta _j, j=1, \dots ,N$ of
lines through the origin cannot be a Heisenberg uniqueness pair.
There always  exist two distinct finite positive measures such that
$\mu_+,\mu_-$ are  supported on $\S^{d-1}$
 and $\widehat{\mu_+}|_{\R \theta _j}  = \widehat{\mu_-}|_{\R \theta _j} $ for $j=1,\ldots,N$.
\end{corollary}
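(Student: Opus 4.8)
The strategy is to recognize the solution produced in Proposition~\ref{prop:line1} as the Fourier transform of an explicit measure on $\S^{d-1}$, and then to symmetrize it into two positive measures.

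First I would arrange for the spherical harmonic to be real. If $Y\in\mathcal{H}_m^d$ is the harmonic furnished by Proposition~\ref{prop:line1}, then its real and imaginary parts are again spherical harmonics of degree $m$, and since $\theta_j\in\R^d$ the identity $Y(\pm\theta_j)=0$ forces both $\mathrm{Re}\,Y$ and $\mathrm{Im}\,Y$ to vanish at $\pm\theta_j$. Replacing $Y$ by whichever of these two is nonzero, we may assume $Y$ is a nonzero real spherical harmonic with $Y(\theta_j)=Y(-\theta_j)=0$ for every $j$.

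Next I would identify $\widehat{Y\,\d\sigma}$, where $\d\sigma$ denotes the surface measure on $\S^{d-1}$. By the Hecke--Bochner formula---equivalently, by specializing the spherical-harmonic expansion \eqref{eq:helm1} to the single term $Y$---we have
\[
\widehat{Y\,\d\sigma}(r\theta)=c_{m}\,r^{-(d-2)/2}J_{m+(d-2)/2}(r)\,Y(\theta),\qquad r\ge0,\ \theta\in\S^{d-1},
\]
for a nonzero constant $c_m$. Evaluating on the line $\R\theta_j$, that is at $\theta=\pm\theta_j$, makes the factor $Y(\pm\theta_j)$ vanish, so $\widehat{Y\,\d\sigma}\equiv0$ on $\R\theta_1\cup\dots\cup\R\theta_N$. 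Since $Y\not\equiv0$, the finite signed measure $Y\,\d\sigma$ is nonzero, which already shows that $(\S^{d-1},\R\theta_1\cup\dots\cup\R\theta_N)$ fails to be a Heisenberg uniqueness pair.

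Finally, to obtain two distinct positive measures I would shift by a constant: as $Y$ is continuous on the compact sphere, fix $C>0$ with $C+Y\ge0$ and set $\mu_-=C\,\d\sigma$ and $\mu_+=(C+Y)\,\d\sigma$. Both are finite positive measures supported on $\S^{d-1}$, they are distinct because $Y\not\equiv0$, and by linearity $\widehat{\mu_+}-\widehat{\mu_-}=\widehat{Y\,\d\sigma}$ vanishes on each $\R\theta_j$. The only points needing care are checking that the Hecke--Bochner constant $c_m$ is nonzero and noting that one genuinely needs $Y(\pm\theta_j)=0$, not merely $Y(\theta_j)=0$, since the line $\R\theta_j$ also contains $-\theta_j$; both are already secured by the construction, so no step presents a real obstacle once the correspondence $Y\leftrightarrow\widehat{Y\,\d\sigma}$ is established.
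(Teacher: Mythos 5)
Your proposal is correct and follows essentially the same route as the paper: take the spherical harmonic $Y$ from Proposition~\ref{prop:line1} vanishing at $\pm\theta_j$, identify $\widehat{Y\,\d\sigma}$ via the Hecke--Funck (Hecke--Bochner) formula as $c\,r^{-(d-2)/2}J_{m+(d-2)/2}(r)Y(\theta)$, and add a constant density to produce two positive measures --- the paper's choice $\mu_\pm=\bigl(1\pm Y/\norm{Y}_{L^\infty(\S^{d-1})}\bigr)\d\sigma$ is just your $(C,\,C+Y)$ construction in symmetric form. Your explicit verification that $Y$ may be taken real (by passing to its real or imaginary part, which still vanishes at $\pm\theta_j$) is a detail the paper leaves implicit, and it is a welcome addition since positivity of $\mu_\pm$ genuinely requires it.
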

\begin{proof}
Let $Y \in \mathcal{H} ^d_m$ be a non-zero spherical harmonic such that $Y(\pm
\theta _j) = 0 $ for $j=1, \dots ,N$ and define two measures $\mu
_{\pm}$ by 
$$
\mu_\pm=\left(1\pm\frac{Y(\theta)}{\norm{Y(\theta)}_{L^\infty(\S^{d-1})}}\right)\d\sigma(\theta).
$$
Clearly $\mu_{\pm}$ are positive and absolutely continuous with respect to the
surface measure on $\S ^{d-1}$. Furthermore, the Hecke-Funck Formula
\cite{SW} shows that $\widehat{\mu_+-\mu_-}(r\theta)=\dst
cr^{-(d-2)/2}J_{m+(d-2)/2}(r)Y(\theta)$ where $c$ 
is a non-zero  constant. Thus $\widehat{\mu_+}|_{\R \theta _j}  =
\widehat{\mu_-}|_{\R \theta _j} $. 

\end{proof}

\section*{Acknowledgments}
A.\,F.\,B. kindly acknowledge financial support from the IdEx postdoctoral program via the PDEUC project and from ERCEA Advanced Grant 2014 669689 - HADE.

Ph\,.J. kindly acknowledges financial support from 
the French-Tunisian CMCU/U\-ti\-que project 32701UB Popart.

The three authors kindly acknowledge the support of the Austrian-French Ama\-deus project 35598VB-ChargeDisq.

This study has been carried out with financial support from the French State, managed
by the French National Research Agency (ANR) in the frame of the ”Investments for
the future” Programme IdEx Bordeaux - CPU (ANR-10-IDEX-03-02).

\end{document}